\documentclass[a4paper,12pt,reqno]{amsart}
\usepackage{amsmath,graphicx,graphics,amssymb}
\textwidth17cm
\textheight23cm
\oddsidemargin0cm
\evensidemargin0cm

\newtheorem{theo}{Theorem}

\newtheorem{cor}{Corollary}

\newtheorem{prop}{Proposition}

\numberwithin{equation}{section}

\newcommand{\M}{\operatorname{M}}

\newmuskip\pFqskip
\pFqskip=6mu
\mathchardef\pFcomma=\mathcode`, % keep a copy of the comma

\begin{document}

\title{Correlation of a macroscopic dent in a wedge with mixed boundary conditions}
%Symmetries of shamrocks III: 
%%Graphical condensation with free boundary and 
%The symmetric self complementary case}
%\title{The number of lozenge tilings of a hexagon with dents on the boundary}

\author{Mihai Ciucu}
\address{Department of Mathematics, Indiana University, Bloomington, Indiana 47405}

\thanks{Research supported in part by NSF grant DMS-1501052}

\begin{abstract} As part of our ongoing work on the enumeration of symmetry classes of lozenge tilings of hexagons with certain four-lobed structures removed from their center, we consider the case of the tilings which are both vertically and horizontally symmetric. In order to handle this, we need an extension of Kuo's graphical condensation method, which works in the presence of free boundary. Our results allow us to compute exactly the correlation in a sea of dimers of a macroscopic dent in a 90 degree wedge with mixed boundary conditions. We use previous results to compute the correlation of the corresponding symmetrized system with no boundary, and show that its fourth root has the same log-asymptotics as the correlation of the dent in the 90 degree wedge. This is the first result of this kind involving a macroscopic defect. It suggests that the connections between dimer systems with gaps and 2D electrostatics may be deeper that previously thought.
%As an application, we enumerate the vertically and horizontally symmetric lozenge tilings of hexagonal regions with a shamrock removed from their center. 
\end{abstract}

\maketitle

\section{Introduction}

% \cite{Fulmek}

% Figure \ref{fba}

In \cite{vf} we generalized MacMahon's \cite{MacM} beautiful formula stating that the number of lozenge tilings of a hexagonal region of side-lengths $x$, $y$, $z$, $x$, $y$, $z$ (in cyclic order) on a triangular lattice is
\begin{equation}
\prod_{i=1}^x\prod_{j=1}^y\prod_{k=1}^z \frac{i+j+k-1}{i+j+k-2},
\label{eaa}
\end{equation}
by showing that the number of lozenge tilings of hexagonal regions with a 4-lobed structure (called a shamrock) removed from their center is given by a product formula generalizing \eqref{eaa}. Motivated by the singularly elegant situation that all symmetry classes of lozenge tilings of a hexagon are given by equally beautiful formulas (see \cite{And}\cite{Sta}\cite{Kup}\cite{Ste}\cite{KKZ} and the survey \cite{Bres} for more recent developments), it is natural to consider the problem of enumerating the symmetry classes of tilings of these more general regions. Six new questions arise in this way. In \cite{symffa} we solved the cyclically symmetric and the cyclically symmetric and transpose complementary cases (invariance under rotation by 120 degrees, resp. invariance under the same plus reflection across vertical), and in \cite{symffb} we presented the transpose complementary case (invariance under reflection across vertical).

The purpose of this paper is to present the enumeration of a fourth case, that of symmetric and self complementary tilings (i.e., tilings that are horizontally symmetric and centrally symmetric). We achieve this by first generalizing the family of regions, and then proving product formulas for the number of tilings of the more general regions (see Theorem \ref{tbc} and Proposition~\ref{tca}).

Very useful for proving such formulas is Kuo's graphical condensation method \cite{KuoOne}\cite{KuoTwo}. However, the tilings we consider are tilings or regions for which part of their boundary is free (i.e. lozenges are allowed to protrude out halfway through those portions), and Kuo's original results did not deal with the case of a free boundary. We therefore need to first work out free boundary versions of Kuo's formulas.

We present three such free boundary analogs (see Theorem \ref{tba} and Corollary \ref{tbb}). The first one (which applies in a more general setting than the other two) is an eight-term recurrence. The other two are four-term recurrences (one being exactly Kuo's Pfaffian recurrence!) that can be deduced from the first one. One of the latter is what we use to prove our results.

Our results allow us to compute exactly the correlation in a sea of dimers of a macroscopic dent in a 90 degree wedge with mixed boundary conditions (see Theorem \ref{tda}). We use previous results to compute the correlation of the corresponding symmetrized system with no boundary(see Theorem \ref{tdb}), and show that its fourth root has the same log-asymptotics as the correlation of the dent in the 90 degree wedge (see Corollary \ref{tdd}). This is the first result of this kind involving a macroscopic defect. It suggests that the connections between dimer systems with gaps and 2D electrostatics may be deeper that previously thought.

\section{Statement of main results}

For a planar graph $G$ with weights on its edges and a distinguished subset $S$ of vertices on some face, we denote by $M_f(G)$ the sum of the weights\footnote{ The weight of a matching is the product of the weights of the edges in it.} of all the (not necessarily perfect) matchings of $G$ in which all the vertices that are not in $S$ are matched, but those in $S$ are free to be matched or not matched (the distinguished subset of vertices $S$ will be clear from context, so we do not need to include $S$ in the notation). Clearly, setting all the edge weights equal to 1 results in $M_f(G)$ simply counting all such matchings.

If $a,b,c,d\notin S$ are four vertices appearing in this cyclic order on the same face as the one containing the vertices in $S$, we say that $S$ is {\it $a,c$-separated} if there are no mutually disjoints paths $P_1,P_2,P_3$ in $G$ so that $P_1$ connects $a$ to $c$, $P_2$ connects $b$ to some vertex in $S$, and $P_3$ connects $d$ to some other vertex of $S$.

\begin{theo}
\label{tba}
Let $G$ be a weighted planar graph with the vertices $a$, $b$, $c$, $d$ appearing in that cyclic order on a face $F$ of $G$. Let $S$ be a subset of the vertices of $F$ that is disjoint with $\{a,b,c,d\}$, and assume that $S$ is $a,c$-separated.

Then we have
\begin{align}
&\!\!\!\!\!\!
\M_f(G)\M_f(G\setminus\{a,b,c,d\})
+
\M_f(G\setminus\{b,d\})\M_f(G\setminus\{a,c\})
\nonumber
\\
&\ \ \ \ \ \ \ \ \ \ \ \ \ \ \ \ \ \ \ \ \ \ \ \ \ \ \ \ \ \ \ \ \ 
+
\M_f(G\setminus\{b\})\M_f(G\setminus\{a,c,d\})
+
\M_f(G\setminus\{d\})\M_f(G\setminus\{a,b,c\})
\nonumber
\\
=
&
\M_f(G\setminus\{a,d\})\M_f(G\setminus\{b,c\})
+
\M_f(G\setminus\{a,b\})\M_f(G\setminus\{c,d\})
\nonumber
\\
&\ \ \ \ \ \ \ \ \ \ \ \ \ \ \ \ \ \ \ \ \ \ \ \ \ \ \ \ \ \ \ \ 
+
\M_f(G\setminus\{a\})\M_f(G\setminus\{b,c,d\})
+
\M_f(G\setminus\{a,b,d\})\M_f(G\setminus\{c\}).
\label{eba}
\end{align}
\end{theo}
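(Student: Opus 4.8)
The plan is to prove the eight-term identity \eqref{eba} by the standard Kuo-condensation strategy adapted to the free-boundary matching count $\M_f$: interpret each product $\M_f(G\setminus X)\M_f(G\setminus Y)$ as a count of pairs of (partial) matchings on the disjoint union $G\sqcup G$, superimpose them to obtain a collection of paths, cycles, and doubled edges, and then exhibit a weight-preserving involution on the relevant configuration spaces that pairs up each term on the left with a term on the right (or, more precisely, shows the total over the left equals the total over the right). The key structural observation is that when we superimpose a matching counted by $\M_f(G\setminus X)$ with one counted by $\M_f(G\setminus Y)$, the vertices that are deleted or allowed-to-be-free act as endpoints of paths in the superposition; since $a,b,c,d$ lie in cyclic order on the face $F$ together with the free set $S$, the planarity forces these paths to be non-crossing, which severely restricts how the endpoints $a,b,c,d$ and the $S$-endpoints can be connected.

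\smallskip

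\noindent First I would set up the bijective framework carefully: fix for reference the superposition $\M_f(G)\,\M_f(G\setminus\{a,b,c,d\})$ (the first term on the left). A pair of such matchings superimposes to a disjoint union of doubled edges, even cycles, and open paths whose endpoints, counted with multiplicity, are exactly $\{a,b,c,d\}$ (coming from the second matching being forbidden to cover them) together with the vertices of $S$ that one matching leaves uncovered and the other does not — and each vertex of $S$ contributes $0$ or $1$ path-endpoint depending on its two local states. Doing the same for every one of the eight terms, one sees each superposition yields a path system on $G$ with path-endpoints among $\{a,b,c,d\}\cup S$, where the subset of $\{a,b,c,d\}$ that occurs is determined by which of $a,b,c,d$ were deleted from \emph{exactly one} of the two copies. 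The classification of terms is then by how $\{a,b,c,d\}$ is split: the left-hand side collects the splittings $\emptyset|\{a,b,c,d\}$, $\{b,d\}|\{a,c\}$, $\{b\}|\{a,c,d\}$, $\{d\}|\{a,b,c\}$, and the right-hand side collects $\{a,d\}|\{b,c\}$, $\{a,b\}|\{c,d\}$, $\{a\}|\{b,c,d\}$, $\{a,b,d\}|\{c\}$ — note that within each side the ``which copy'' ambiguity is resolved consistently, and the two sides are distinguished precisely by the parity/pattern of how $a$ and $c$ sit relative to $b$ and $d$ along the face.

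\smallskip

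\noindent The heart of the argument, and the step I expect to be the main obstacle, is the \emph{switching involution}: given a configuration arising on one side, one locates a distinguished path (the one with an endpoint at $a$, say, or the path separating $a$ from $c$), and toggles which copy of $G$ its edges belong to, thereby converting the pair of deletion-sets $X,Y$ into a new pair $X',Y'$ that realizes a splitting on the other side of \eqref{eba}. The delicate point is precisely where the hypothesis that $S$ is \emph{$a,c$-separated} enters: this hypothesis forbids the simultaneous existence of disjoint paths joining $a$ to $c$, $b$ to $S$, and $d$ to $S$, which is exactly the configuration type that would otherwise have \emph{no} partner under the switching map (it would correspond to a ``split'' that cannot be repaired because $a$ and $c$ are on the same path while $b$ and $d$ escape to the free boundary independently). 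Ruling out that configuration is what makes the involution a genuine bijection between the left-hand and right-hand configuration spaces, and hence makes the two sides of \eqref{eba} equal. I would therefore organize the proof as: (i) the superposition setup and endpoint bookkeeping; (ii) the non-crossing/planarity analysis that enumerates the possible path-endpoint patterns; (iii) invoking $a,c$-separatedness to excise the single obstructing pattern; and (iv) the explicit switching involution matching the surviving patterns across the identity, with weights manifestly preserved since switching only relabels which copy an edge lies in. One should also double-check the degenerate cases where some of the graphs $G\setminus X$ have no valid matching (so the term is $0$) — these are handled automatically since the involution maps empty configuration spaces to empty ones.
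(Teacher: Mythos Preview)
Your proposal is correct and follows essentially the paper's own proof: the paper superimposes the two matchings in each product as solid/dotted edges on one copy of $G$, observes that in every left-hand term $a$ is matched by a solid edge while in every right-hand term it is dotted, and defines the bijection by reversing solid/dotted along the unique path through $a$; the $a,c$-separated hypothesis is invoked exactly as you describe, to exclude the case where that path ends at $c$ (which, by planarity and the cyclic order of $a,b,c,d$ on $F$, would force $b$ and $d$ each to be joined to $S$). The paper makes the bookkeeping concrete by partitioning each of the four Cartesian products on each side into three classes according to whether $a$ is connected to $b$, to $d$, or to a vertex of $S$, and then checking the resulting twelve-to-twelve correspondence --- this is precisely your step (iv), and your step (iii) is the paper's ``key to our proof'' paragraph.
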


\begin{proof} For any subgraph $H$ of $G$ containing the vertices in $S$, denote by $\mathcal M_f(H)$ the set of matchings of $H$ in which all vertices not in $S$ are matched, but the ones in $S$ are free to be matched or not matched. Patterned on the two sides of equation \eqref{eba}, consider the disjoint unions of Cartesian products
\begin{align}
&\!\!\!\!\!\!\!\!\!\!\!\!\!\!\!\!
\mathcal M_f(G)\times \mathcal M_f(G\setminus\{a,b,c,d\}) 
\cup
\mathcal M_f(G\setminus\{b,d\})\times\mathcal M_f(G\setminus\{a,c\})
\nonumber
\\
&\ \ \ \ \ \ 
\cup
\mathcal M_f(G\setminus\{b\})\times\mathcal M_f(G\setminus\{a,c,d\})
\cup
\mathcal M_f(G\setminus\{d\})\times\mathcal M_f(G\setminus\{a,b,c\})
\label{ebb}
\end{align}
and
\begin{align}
&\!\!\!\!\!\!\!\!\!\!\!\!\!\!\!\!
\mathcal M_f(G\setminus\{a,d\})\times \mathcal M_f(G\setminus\{b,c\}) 
\cup
\mathcal M_f(G\setminus\{a,b\})\times\mathcal M_f(G\setminus\{c,d\})
\nonumber
\\
&\ \ \ \ \ \ 
\cup
\mathcal M_f(G\setminus\{a\})\times\mathcal M_f(G\setminus\{b,c,d\})
\cup
\mathcal M_f(G\setminus\{a,b,d\})\times\mathcal M_f(G\setminus\{c\}).
\label{ebc}
\end{align}
For any element $(\mu,\nu)$ of \eqref{ebb} or \eqref{ebc}, think of the edges of $\mu$ as being marked by solid lines, and of the edges of $\nu$ as marked by dotted lines, {\it on the same copy of the graph $G$} (any edge common to $\mu$ and $\nu$ will be marked both solid and dotted, by two parallel arcs). Note that for all $(\mu,\nu)$ corresponding to cartesian products in \eqref{ebb}, $a$ is matched by a solid edge (this is the reason for the chosen order of factors in the terms of \eqref{eba}).

Define the weight of $(\mu,\nu)$ to be the product of the weight of $\mu$ and the weight of $\nu$. Then the total weight of the elements of the set \eqref{ebb} is equal to the left hand side of equation~\eqref{eba}, while the total weight of the elements of the set \eqref{ebc} equals the right hand side of \eqref{eba}. Therefore, to prove \eqref{eba} it suffices to construct a weight-preserving bijection between the sets~\eqref{ebb} and~\eqref{ebc}.

We construct such a bijection as follows. Let $(\mu,\nu)$ be an element of \eqref{ebb}. Map  $(\mu,\nu)$ to what we get from it by ``shifting along the path containing $a$.'' More precisely, note that when considering the edges of $\mu$ and $\nu$ together on the same copy of $G$, each of the vertices $a,b,c,d$ is incident to precisely one edge. All the other vertices of $G$ that are not in $S$ are incident to one solid edge and one dotted edge. Finally, each vertex in $S$ could be incident to no edge, to a single edge (solid or dotted), or to one solid and one dotted edge.

\begin{figure}[h]
\centerline{
\hfill
{\includegraphics[width=0.90\textwidth]{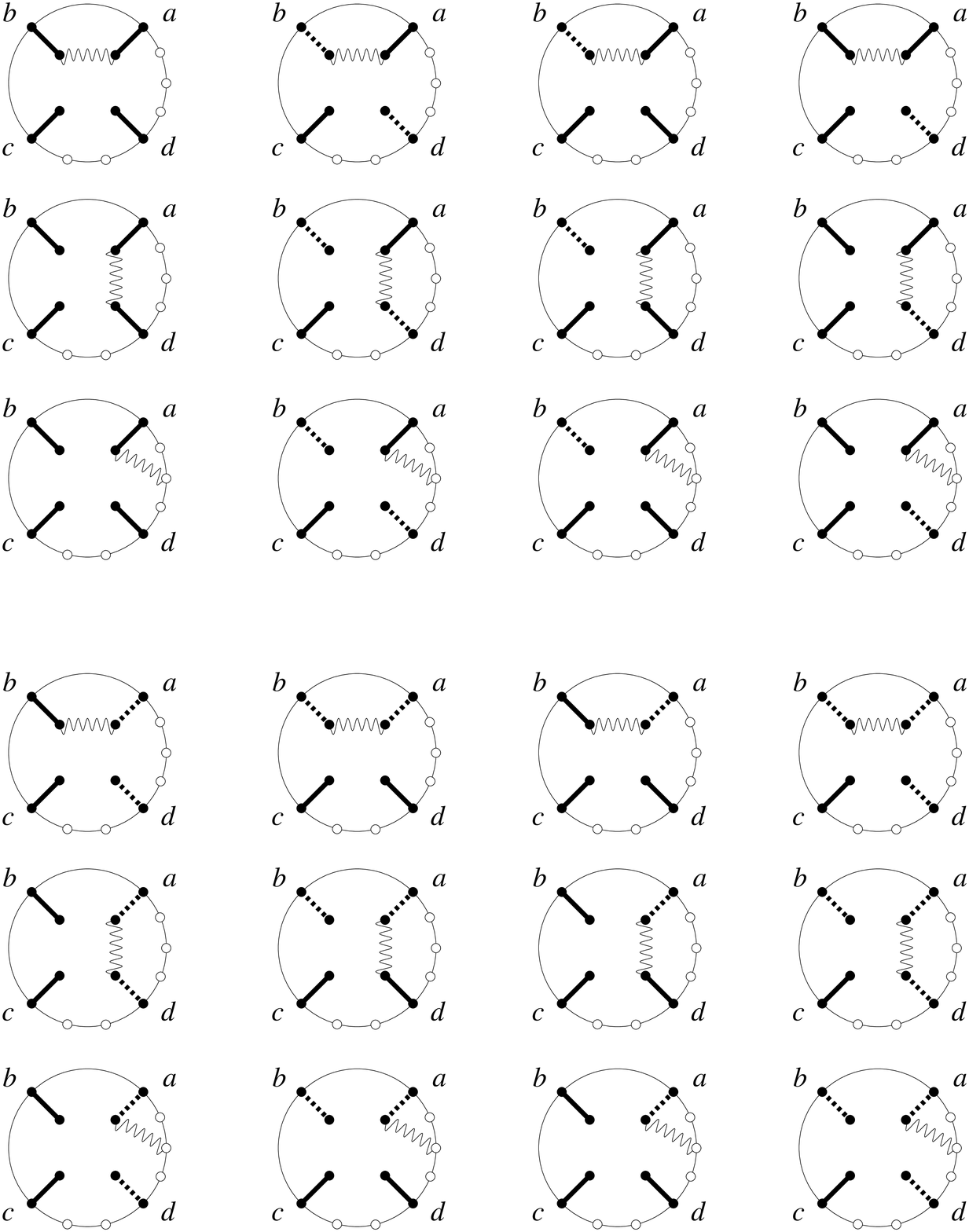}}
\hfill
}
\vskip-0.1in
\caption{Schematic representation of the bijection proving \eqref{eba}. Shifting along path containing $a$ matches the partition classes according to the pattern \newline
$A_1\ \ A_2\ \ A_3\ \ B_1\ \ B_2\ \ B_3\ \ C_1\ \ C_2\ \ C_3\ \ D_1\ \ D_2\ \ D_3$\newline
$B'_1\ \ A'_2\ \ C'_3\ \ A'_1\ \ B'_2\ \ D'_3\ \ C'_1\ \ D'_2\ \ B'_3\ \ D'_1\ \ C'_2\ \ A'_3$
}
\vskip-0.1in
\label{fba}
\end{figure}

This implies that $\mu\cup\nu$ is the disjoint union of (1) paths connecting each of $a$, $b$, $c$ and $d$ either to another element of $\{a,b,c,d\}$ or to some vertex in $S$; (2) paths (if any) connecting in pairs some of the vertices of $S$ not connected to $\{a,b,c,d\}$; and (3) cycles covering all the remaining vertices of~$G\setminus S$ and possibly some of the remaining vertices of $S$. Consider the path containing $a$, and change each solid edge in it to dotted, and each dotted edge to solid. Denote the resulting pair of matchings by $(\mu',\nu')$. 
Clearly, the weight of $(\mu',\nu')$ is the same as the weight of $(\mu,\nu)$. Therefore, it is enough to show that this map is a bijection.

To see this, we partition each of the four cartesian products in \eqref{ebb} into three classes, according to the three connection possibilities for vertex $a$: We gather those $(\mu,\nu)$ for which, in the superposition of $\mu$ and $\nu$, $a$ is connected by a path to $b$, into one class; those for which $a$ is connected by a path to $d$ into another class; and those for which $a$ is connected by a path to some vertex in $S$ into a third class (these partitions are represented schematically in the top half of Figure \ref{fba}).

The key to our proof (and the reason the above description forms a partition) is that, due to our assumption that $S$ is $a,c$-separated, the situation of $a$ being connected by a path to $c$ does not arise. 
%Indeed, if it did, then it would be impossible for both $b$ and $d$ to be connected by paths: They could not be connected to one another due to planarity and the fact that $a$ is connected to $c$; and since all vertices in $S$ are in the same connected component of $\partial F\setminus\{a,c\}$, only one of $b$ or $d$ could be connected by a path to a vertex in $S$.

Partition similarly each of the four cartesian products in \eqref{ebc} into three classes, according to the same three possible connection types for $a$. These are illustrated in the bottom half of Figure \ref{fba}. 

Under the above mapping $(\mu,\nu)\mapsto(\mu',\nu')$, each of the twelve classes of superpositions of matchings corresponding to the cartesian products in \eqref{ebb} turns out to be mapped bijectively to a different class of superpositions of matchings corresponding to the cartesian products in~\eqref{ebc}. The correspondence is indicated in Figure \ref{fba} (the top four groups of 3 ``balls'' are denoted from left to right by $A$, $B$, $C$, $D$, and the bottom ones by $A'$, $B'$, $C'$, $D'$; the subscript $i$ indicates that the $i$th ball from the group --- counting from the top --- is chosen).

Indeed, consider for instance from among the twelve classes of superpositions of matchings corresponding to the cartesian products in \eqref{ebb}, the class consisting of those $(\mu,\nu)$ corresponding to the first cartesian product in \eqref{ebb} for which $a$ is connected by a path to $b$ in the superposition of $\mu$ and $\nu$. Since both $a$ and $b$ are matched by solid edges, after we apply our construction to obtain $(\mu',\nu')$ (which recall consists in reversing the type of all edges in the path containing $a$ --- turning solid edges to dotted, and dotted to solid), both $a$ and $b$ will be matched by dotted edges. They will also clearly still be connected to one another. Therefore this class is mapped into the ``$a$ connected to $b$'' class of the second cartesian product in \eqref{ebc}. Since our map is clearly an involution, it establishes a bijection between these two classes.

Similarly, the ``$a$ connected to $d$'' class of the first cartesian product in \eqref{ebb} is mapped bijectively onto the ``$a$ connected to $d$'' class of the first cartesian product in \eqref{ebc}.

The remaining class of the first cartesian product in \eqref{ebb} consists of those $(\mu,\nu)$ for which, when superimposing $\mu$ and $\nu$, $a$ gets connected by a path to some vertex in $S$. As the edge matching $a$ in this path is solid, and we are reversing the types of the edges in this path to get $(\mu',\nu')$, the edge matching $a$ in the superposition of $(\mu',\nu')$ is dotted, and the edges matching $b$, $c$ and $d$ remain all solid. Thus this third class is mapped bijectively onto the ``$a$ connected to some vertex in $S$'' class of the third cartesian product in \eqref{ebc}.

All remaining bijective mappings of classes indicated in Figure \ref{fba} are justified similarly. This completes the proof. \end{proof}

\begin{cor}
\label{tbb}
Suppose we have all the hypotheses of Theorem \ref{tba}, and assume in addition that $S$ is also $b,d$-separated. Then we have
\medskip
\begin{align}
&\!\!\!\!\!\!
\M_f(G)\M_f(G\setminus\{a,b,c,d\})
+
\M_f(G\setminus\{b,d\})\M_f(G\setminus\{a,c\})
\nonumber
\\
&\ \ \ \ \ \ \ \ \ \ \ \ \ \ \ \ \ \ \ \ \ \ \ \ 
=
\M_f(G\setminus\{a,d\})\M_f(G\setminus\{b,c\})
+
\M_f(G\setminus\{a,b\})\M_f(G\setminus\{c,d\})
\label{ebd}
\end{align}
\medskip
and
\medskip
\begin{align}
&\!\!\!\!\!\!
\M_f(G\setminus\{b\})\M_f(G\setminus\{a,c,d\})
+
\M_f(G\setminus\{d\})\M_f(G\setminus\{a,b,c\})
\nonumber
\\
&\ \ \ \ \ \ \ \ \ \ \ \ \ \ \ \ \ \ \ \ \ \ \ \ 
=
\M_f(G\setminus\{a\})\M_f(G\setminus\{b,c,d\})
+
\M_f(G\setminus\{a,b,d\})\M_f(G\setminus\{c\}).
\label{ebe}
\end{align}
\end{cor}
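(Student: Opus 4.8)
The plan is to derive both four-term recurrences \eqref{ebd} and \eqref{ebe} from the eight-term recurrence \eqref{eba} of Theorem~\ref{tba}, exploiting the extra hypothesis that $S$ is $b,d$-separated. The point is that Theorem~\ref{tba} was proved by constructing a weight-preserving involution obtained by shifting along the path through $a$; because $S$ is $a,c$-separated, the path through $a$ never reaches $c$, which is what split the twelve classes neatly into the pattern in Figure~\ref{fba}. Under the additional assumption that $S$ is $b,d$-separated, we get a \emph{second} symmetry of the same combinatorial picture: shifting along the path through $b$ is also well-defined as an involution, and now the path through $b$ never reaches $d$.

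First I would set up the analog of the proof of Theorem~\ref{tba} but organized around the vertex $b$ instead of $a$. Concretely, I would regroup the eight products in \eqref{eba} into two collections of four Cartesian products each, chosen so that in every superposition $(\mu,\nu)$ the vertex $b$ is matched by a solid edge; the natural choice is to pair $\M_f(G\setminus\{a\})\M_f(G\setminus\{b,c,d\})$ on the left with the appropriate partners so that the "shift along the path through $b$" involution sends one collection to the other. The three classes for $b$ are then "$b$ connected to $a$", "$b$ connected to $c$", and "$b$ connected to a vertex in $S$" — the case "$b$ connected to $d$" being excluded precisely by the $b,d$-separated hypothesis. Running the same bijection argument as in the proof of Theorem~\ref{tba} (reversing solid/dotted along the path through $b$, checking the involution matches classes in the pattern dictated by the new Figure) yields a four-term identity among the eight $\M_f$-products.

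The cleaner route, which I would actually follow, is to argue directly that the $a,c$-separated and $b,d$-separated hypotheses together force the four "odd" terms (those involving deletions of an odd number of the four vertices, i.e.\ $\M_f(G\setminus\{b\})\M_f(G\setminus\{a,c,d\})$, $\M_f(G\setminus\{d\})\M_f(G\setminus\{a,b,c\})$, $\M_f(G\setminus\{a\})\M_f(G\setminus\{b,c,d\})$, $\M_f(G\setminus\{a,b,d\})\M_f(G\setminus\{c\})$) to balance among themselves, and likewise the four "even" terms. Indeed, in the involution of Theorem~\ref{tba}, a superposition in which $a$ is connected to a vertex of $S$ stays of the same "even/odd deletion parity" type, and the two classes where $a$ connects to $b$ or to $d$ pair even-with-even and odd-with-odd as well — one checks from Figure~\ref{fba} that the classes $A_3, B_3, C_3, D_3$ (the "$a\!\sim\!S$" classes) on top map to $C'_3, D'_3, B'_3, A'_3$, and tracking which of these correspond to odd-deletion products shows the involution restricts to a bijection between the odd-term classes of the two sides and, separately, between the even-term classes. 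That restriction is exactly the assertion that the left side of \eqref{ebe} equals its right side, and the left side of \eqref{ebd} equals its right side; subtracting \eqref{ebe} from \eqref{eba} gives the remaining terms and hence \eqref{ebd}.

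The main obstacle is bookkeeping: one must verify that the involution of Theorem~\ref{tba}, together with the second symmetry coming from $b,d$-separatedness, genuinely respects the even/odd split — i.e.\ that no class of even-deletion type is sent to a class of odd-deletion type. This is not automatic from $a,c$-separatedness alone; it is the role of the new hypothesis. I expect the slickest way to nail it down is to observe that the set \eqref{ebb} decomposes as $\mathcal E_{\mathrm{LHS}}\sqcup\mathcal O_{\mathrm{LHS}}$ (even/odd number of deleted vertices among $a,b,c,d$) and similarly \eqref{ebc} $=\mathcal E_{\mathrm{RHS}}\sqcup\mathcal O_{\mathrm{RHS}}$, then check that the shift-along-$a$ involution preserves the even/odd label on a superposition \emph{except} possibly when the path through $a$ ends in $S$ — and in that last case the path through $b$ (which is well-defined by the extra hypothesis) can be used to restore parity, or else one simply notes directly from the class-matching pattern of Figure~\ref{fba} that every arrow respects parity once $a\!\sim\!c$ and $b\!\sim\!d$ are both forbidden. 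Once that parity-preservation is established, \eqref{ebd} and \eqref{ebe} follow immediately by restricting the bijection of Theorem~\ref{tba} to the even and odd parts respectively.
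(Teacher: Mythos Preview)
Your ``cleaner route'' has a genuine gap. From the class-matching pattern in Figure~\ref{fba}, the four $a\!\sim\!S$ arrows are $A_3\leftrightarrow C'_3$, $B_3\leftrightarrow D'_3$, $C_3\leftrightarrow B'_3$, $D_3\leftrightarrow A'_3$, and \emph{every one of them crosses parity}: on the top, $A,B$ are the even-deletion products and $C,D$ the odd ones, and likewise $A',B'$ are even and $C',D'$ odd on the bottom. So the shift-along-$a$ involution does \emph{not} restrict to a bijection between the even (or odd) parts. Your claim that ``every arrow respects parity once $a\!\sim\!c$ and $b\!\sim\!d$ are both forbidden'' is simply false: the $b,d$-separated hypothesis does not empty any of the twelve classes in Figure~\ref{fba}, since those classes record only what $a$ is connected to, and $b\!\sim\!d$ was never one of the options. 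The alternative patch of also shifting along the path through $b$ is too vague to count as a proof; you would have to specify a single well-defined involution on the whole set and check it is weight-preserving and parity-respecting.

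The paper's argument is much shorter and sidesteps all of this. One applies Theorem~\ref{tba} a \emph{second} time, to the cyclically shifted list $b,c,d,a$ (legitimate precisely because $S$ is $b,d$-separated), obtaining another eight-term identity \eqref{ebf}. In \eqref{ebf} the two even-deletion products sit on the same sides as in \eqref{eba}, but the four odd-deletion products have switched sides. Adding \eqref{eba} and \eqref{ebf} cancels all odd terms and yields \eqref{ebd}; subtracting yields \eqref{ebe}. Your first approach (redoing the bijection around $b$) is essentially a re-derivation of \eqref{ebf}, so the instinct is right --- but by itself that gives another eight-term relation, not a four-term one. The step you are missing is the purely algebraic one: take the sum and difference of the two eight-term identities.
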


%\medskip
\begin{proof} By our assumptions and Theorem \ref{tba}, equation \eqref{eba} holds. In addition, by applying Theorem \ref{tba} by viewing $S$ as being $b,d$-separated, we obtain
\medskip
\begin{align}
&\!\!\!\!\!\!
\M_f(G)\M_f(G\setminus\{a,b,c,d\})
+
\M_f(G\setminus\{a,c\})\M_f(G\setminus\{b,d\})
\nonumber
\\
&\ \ \ \ \ \ \ \ \ \ \ \ \ \ \ \ \ \ \ \ \ \ \ \ \ \ \ \ \ \ \ \ \ 
+
\M_f(G\setminus\{a\})\M_f(G\setminus\{b,c,d\})
+
\M_f(G\setminus\{c\})\M_f(G\setminus\{a,b,d\})
\nonumber
\\
=
&
\M_f(G\setminus\{a,b\})\M_f(G\setminus\{c,d\})
+
\M_f(G\setminus\{b,c\})\M_f(G\setminus\{a,d\})
\nonumber
\\
&\ \ \ \ \ \ \ \ \ \ \ \ \ \ \ \ \ \ \ \ \ \ \ \ \ \ \ \ \ \ \ \ 
+
\M_f(G\setminus\{b\})\M_f(G\setminus\{a,c,d\})
+
\M_f(G\setminus\{a,b,c\})\M_f(G\setminus\{d\}).
\label{ebf}
\end{align}
Equations \eqref{eba} and \eqref{ebf} imply \eqref{ebd} and \eqref{ebe}. \end{proof}

\bigskip
\parindent=0pt
{\bf Remark 1.} When $S=\emptyset$, equation \eqref{ebd} becomes Kuo's Proposition 1.1 of  \cite{KuoTwo}:
\medskip
\begin{align}
&\!\!\!\!\!\!
\M(G)\M_f(G\setminus\{a,b,c,d\})
+
\M(G\setminus\{b,d\})\M_f(G\setminus\{a,c\})
\nonumber
\\
&\ \ \ \ \ \ \ \ \ \ \ \ \ \ \ \ \ \ \ \ \ \ \ \ 
=
\M(G\setminus\{a,d\})\M_f(G\setminus\{b,c\})
+
\M(G\setminus\{a,b\})\M_f(G\setminus\{c,d\}),
\label{ebg}
\end{align}

\medskip
where $\M(G)$ stands for the number (or total weight, if $G$ is weighted) of {\it perfect} matchings of~$G$.

%\medskip
\parindent=15pt
If in addition $G$ is bipartite, has two more white than black vertices, and $a$, $b$, $c$, $d$ are all white, \eqref{ebd} becomes Kuo's Theorem 2.5 of \cite{KuoOne}:
\medskip

\begin{figure}[h]
\centerline{
\hfill
{\includegraphics[width=0.44\textwidth]{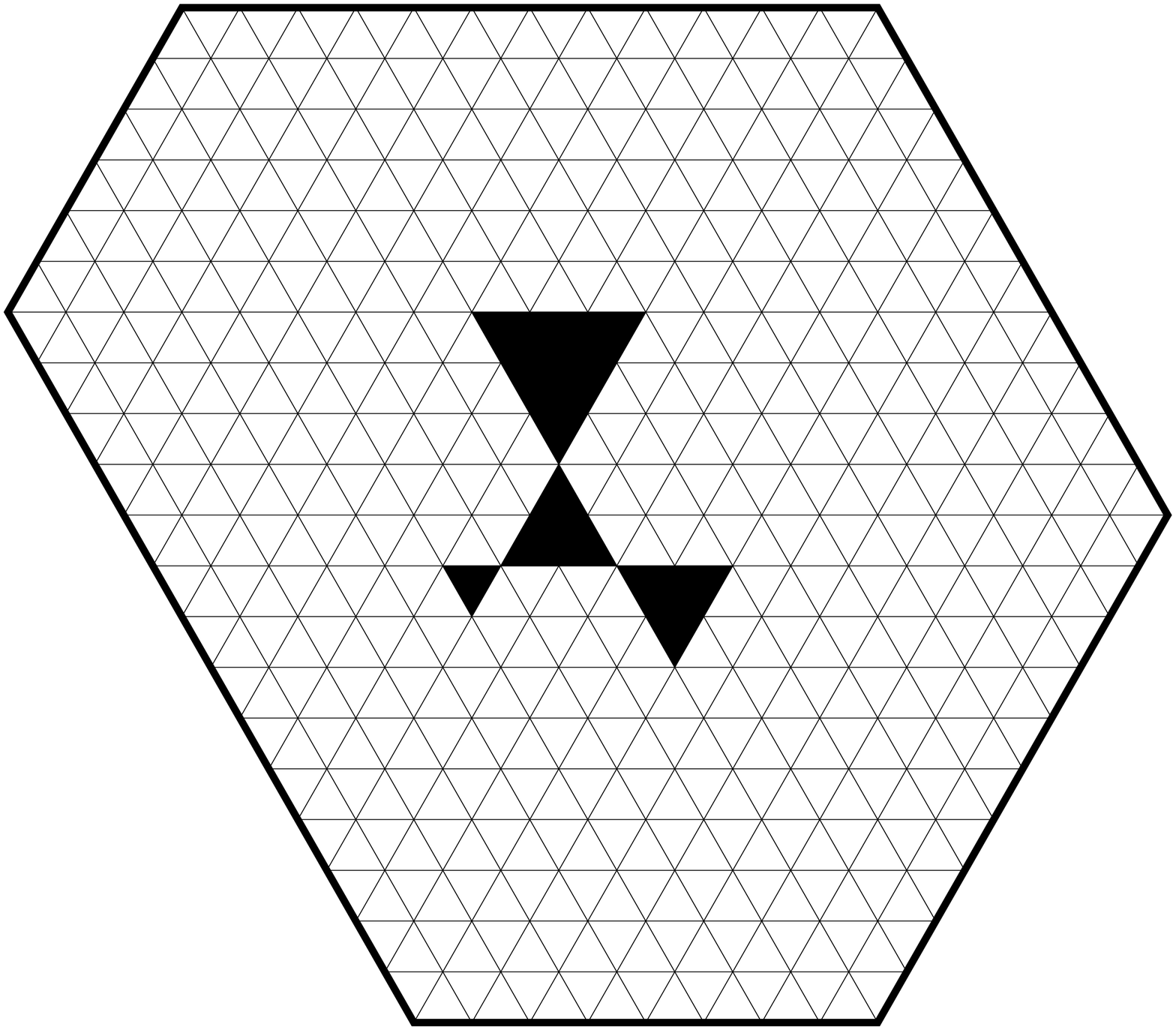}}
\hfill
{\includegraphics[width=0.40\textwidth]{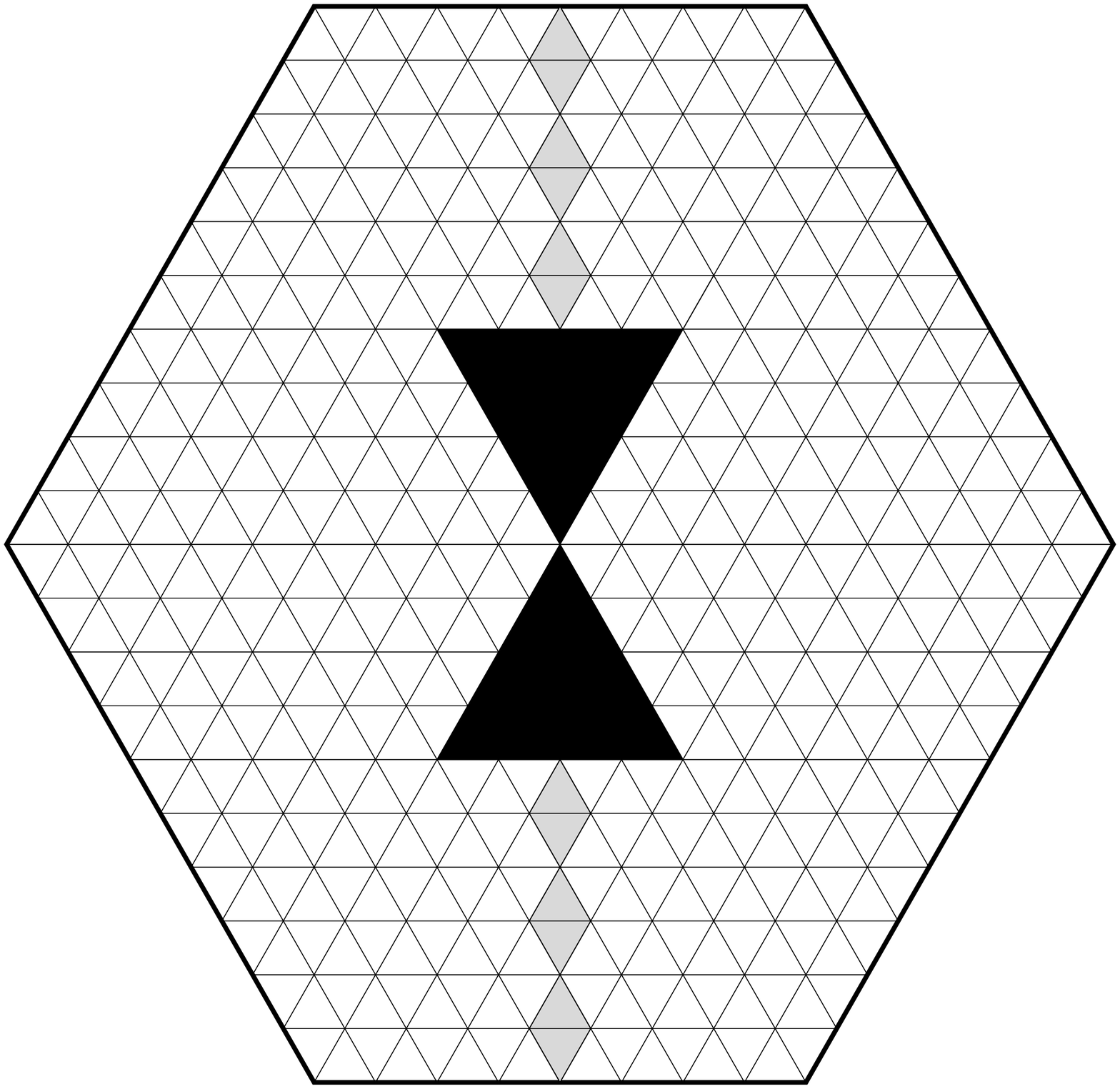}}
\hfill
}
%\vskip-0.1in
\caption{\label{fbb} The $S$-cored hexagon $SC_{6,8,4}(3,1,2,2)$ (left; see \cite{vf} for details of its definition) and the region $H_{8,10}(4)$ (right).}
%\vskip-0.15in
\label{fbb}
\end{figure}

\begin{align}
&\!\!\!\!\!\!
\M(G\setminus\{b,d\})\M_f(G\setminus\{a,c\})
\nonumber
%\\
%&\ \ \ \ \ \ \ \ \ \ \ \ \ \ \ \ \ \ \ \ \ \ \ \ 
=
\M(G\setminus\{a,d\})\M_f(G\setminus\{b,c\})
+
\M(G\setminus\{a,b\})\M_f(G\setminus\{c,d\}).
\nonumber
\end{align}

\medskip
Equation \eqref{ebe} also has a specialization that appeared before in Kuo's work: If $G$ is bipartite, has one more white than black vertices, $a$, $b$, $c$ are white and $d$ is black, then \eqref{ebe} becomes Kuo's Theorem 2.4 of \cite{KuoOne}:
\medskip
\begin{align}
&\!\!\!\!\!\!
\M(G\setminus\{b\})\M(G\setminus\{a,c,d\})
\nonumber
%\\
%&\ \ \ \ \ \ \ \ \ \ \ \ \ \ \ \ \ \ \ \ \ \ \ \ 
=
\M(G\setminus\{a\})\M(G\setminus\{b,c,d\})
+
\M(G\setminus\{a,b,d\})\M(G\setminus\{c\}).
\nonumber
\end{align}

\medskip
\parindent=0pt
More generally, if $G$ is not necessarily bipartite and $S=\emptyset$, \eqref{ebe} becomes
\medskip
\begin{align}
&\!\!\!\!\!\!
\M(G\setminus\{b\})\M(G\setminus\{a,c,d\})
+
\M(G\setminus\{d\})\M(G\setminus\{a,b,c\})
\nonumber
\\
&\ \ \ \ \ \ \ \ \ \ \ \ \ \ \ \ \ \ \ \ \ \ \ \ 
=
\M(G\setminus\{a\})\M(G\setminus\{b,c,d\})
+
\M(G\setminus\{a,b,d\})\M(G\setminus\{c\}).
\label{ebh}
\end{align}

\medskip
This counterpart of Kuo's 2006 theorem \eqref{ebg} seems to have gone unnoticed until now.

\medskip
\parindent=15pt
We state now the other main result of our paper, which deals with enumerating the ``symmetric and self complementary'' lozenge tilings of a hexagonal region with a shamrock removed from its center. This requires our tilings to be invariant under reflection across the horizontal, and under rotation by 180 degrees (i.e., central symmetry). Clearly, a necessary condition for the existence of such tilings is that the region itself is invariant under these symmetries. Central symmetry implies that the bottom lobes of the shamrock are empty, and that the top lobe is congruent to the central lobe. Thus the region is a hexagon $H_{x,y,k}$ of side-lengths $x$, $y$, $y$, $x$, $y$, $y$ (clockwise from top) with a vertical ``bowtie'' consisting of two triangles of side $k$ removed from its center (see the picture on the right in Figure \ref{fbb}).

Since horizontal symmetry and central symmetry imply vertical symmetry, and because in any vertically symmetric tiling the vertical symmetry axis must be entirely covered by vertical lozenges (see the shaded lozenges in Figure \ref{fbb}), it follows that all of $x$, $y$ and $k$ must be even if a tiling with the required symmetries exists.

\begin{figure}[h]
\centerline{
\hfill
{\includegraphics[width=0.44\textwidth]{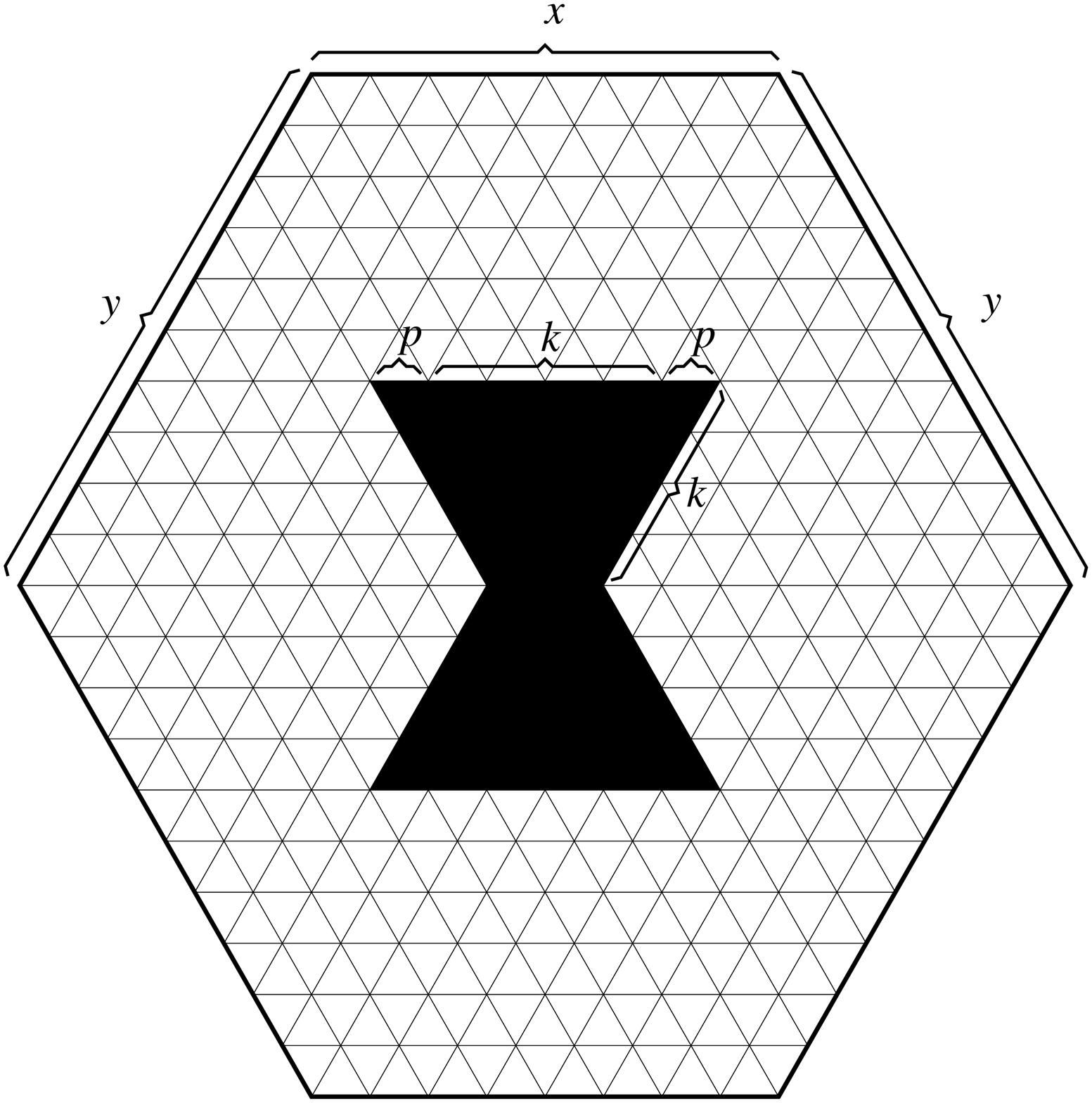}}
\hfill
{\includegraphics[width=0.30\textwidth]{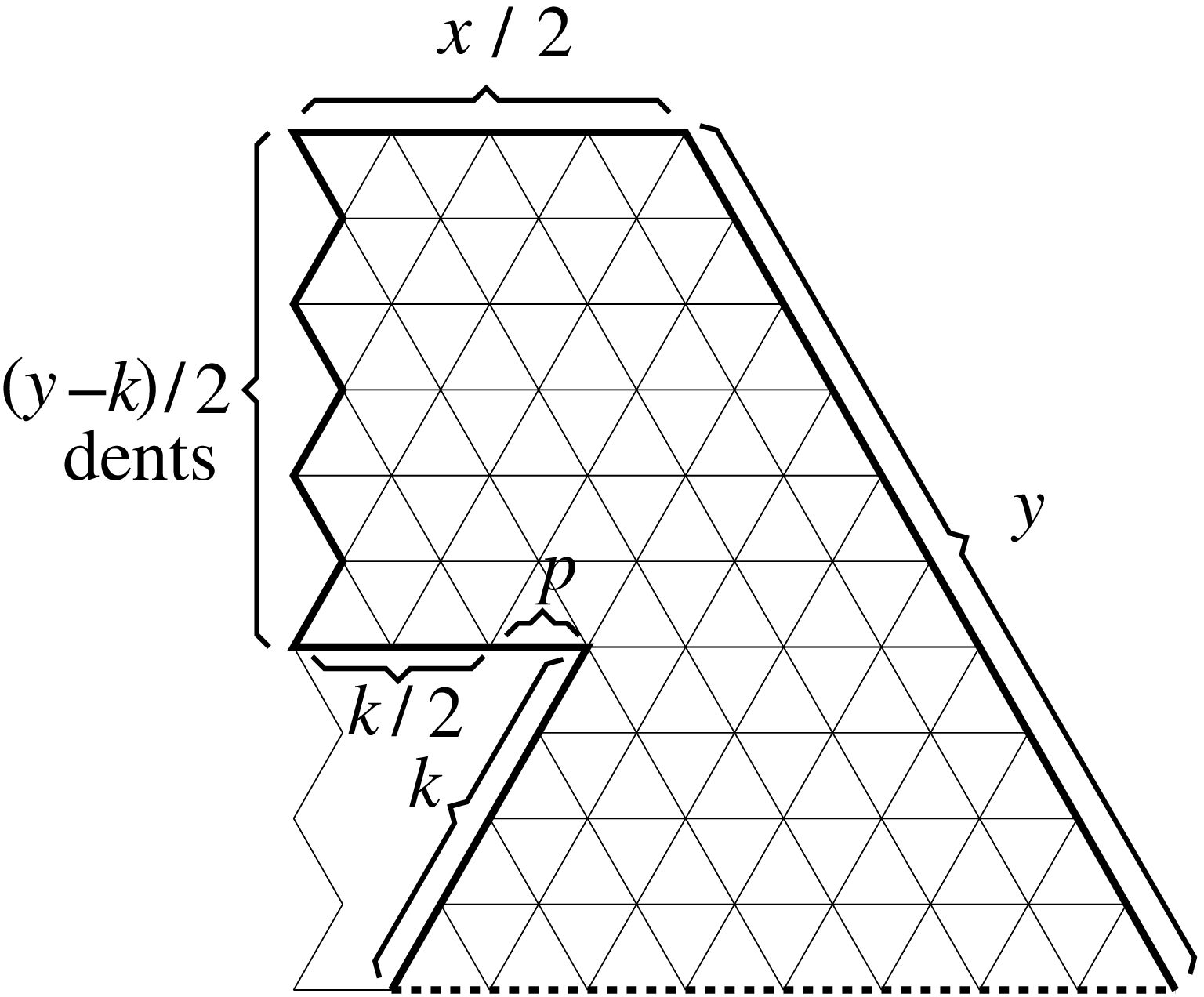}}
\hfill
}
%\vskip-0.1in
\caption{The carpenter's butterfly region $H_{x,y}(k,p)$ for $x=8$, $y=10$, $k=4$, $p=1$ (left) and the flashlight region $F_{x/2,(y-k)/2,k/2,p}$ whose tilings can be identified with the horizontally and vertically symmetric tilings of the former (right).}
%\vskip-0.15in
\label{fbc}
\end{figure}

\begin{figure}[h]
\centerline{
\hfill
{\includegraphics[width=0.25\textwidth]{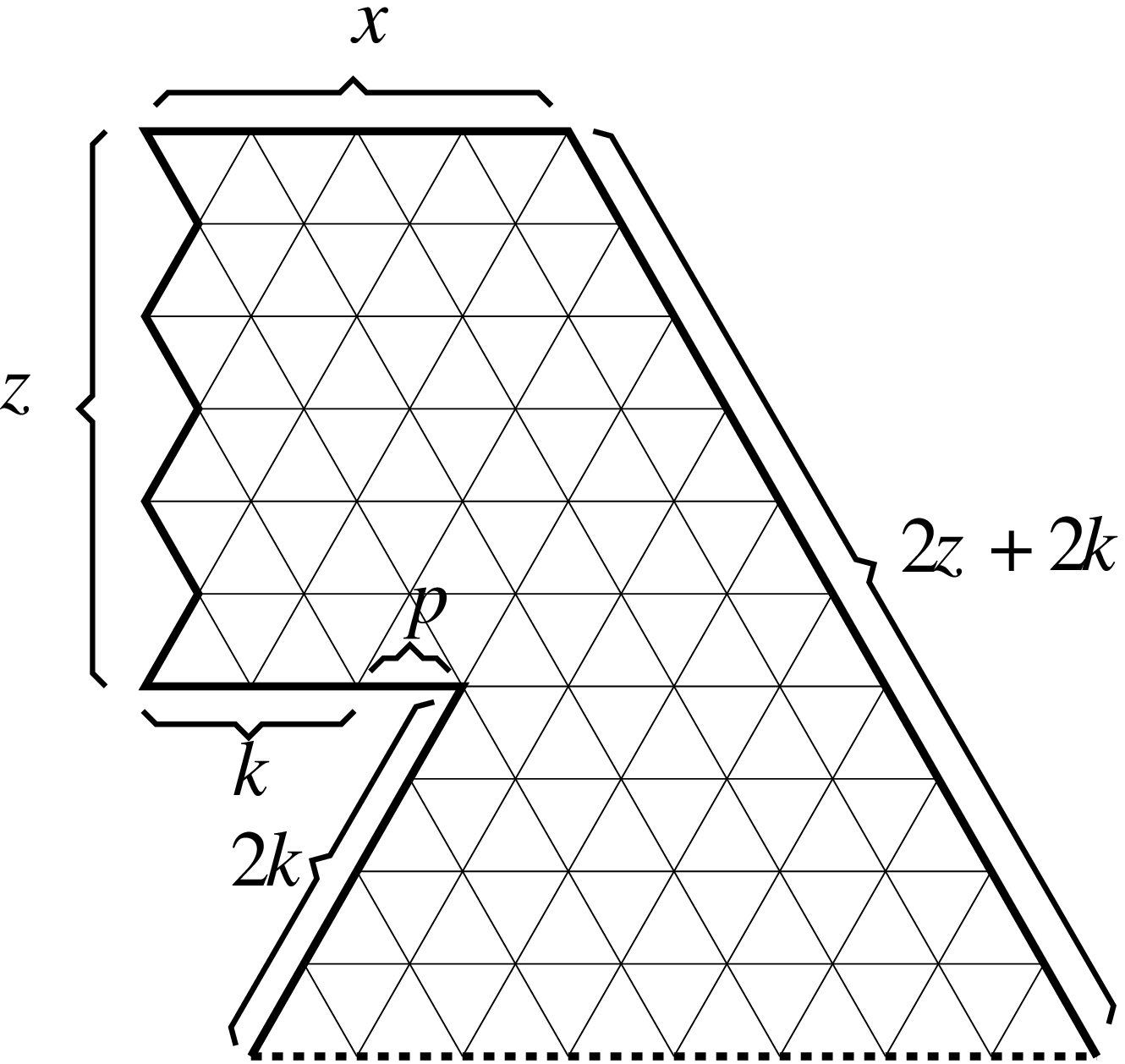}}
\hfill
{\includegraphics[width=0.30\textwidth]{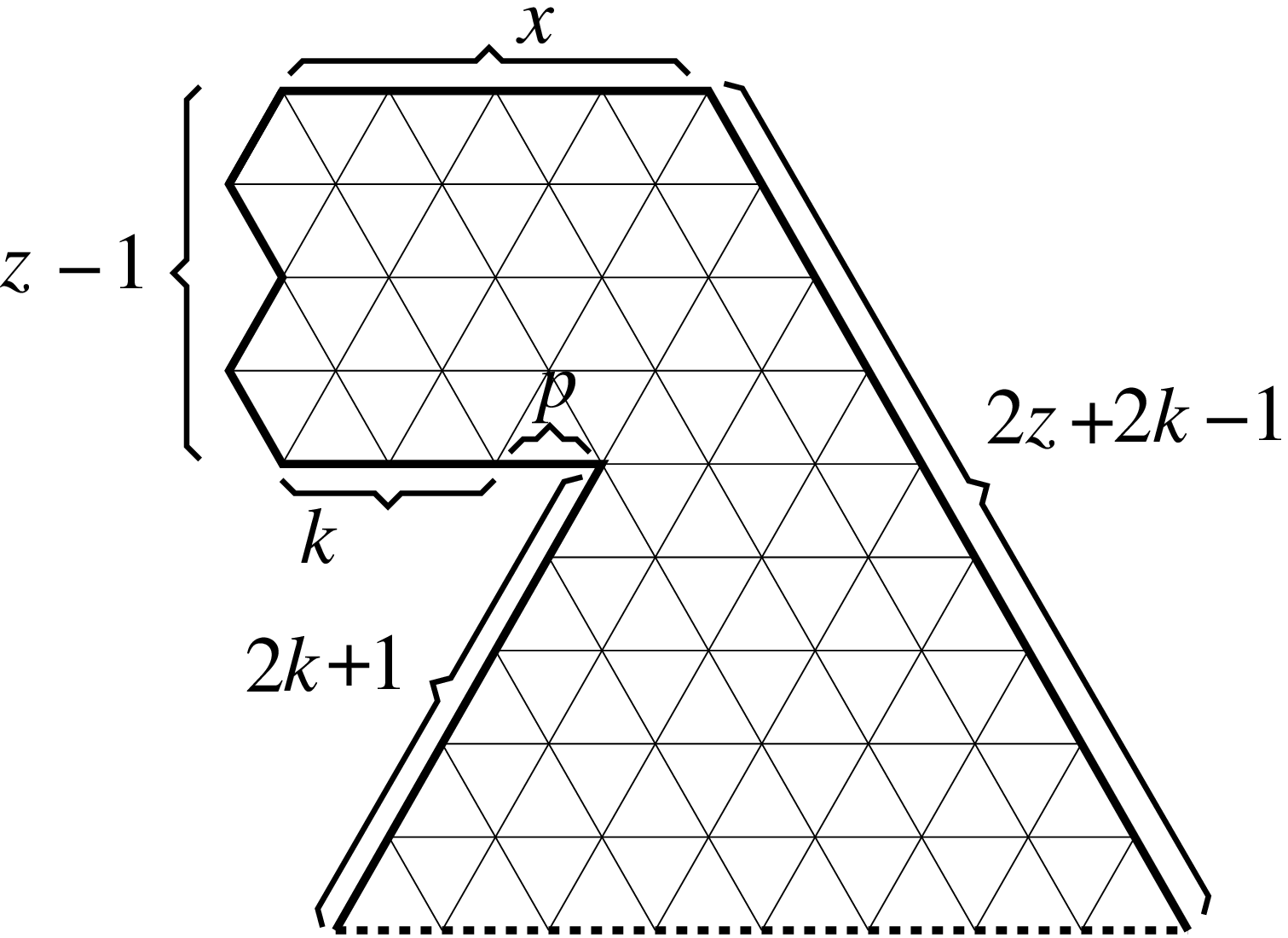}}
\hfill
}
%\vskip-0.1in
\caption{The flashlight region $F_{x,z,k,p}$ (left) and the reduced flashlight region $\hat{F}_{x,z,k,p}$ (right) for $x=4$, $z=3$, $k=2$, $p=1$.}
%\vskip-0.15in
\label{fca}
\end{figure}

Therefore, the symmetry case corresponding to symmetric and self complementary plane partitions amounts to enumerating horizontally and vertically symmetric lozenge tilings of the regions $H_{x,y}(k)$, where $x$, $y$ and $k$ are all even. 

In our main result we actually enumerate tilings of a more general family of regions. This extension turns out to be crucial in order for our proof to work. The more general regions are obtained by ``thickening'' the removed bowtie: Translate its left boundary $p$ units to the left, and its right boundary $p$ units to the right, turning the removed portion into a region resembling a carpenter's butterfly (we are assuming that the latter still fits inside the outer hexagon). Denote by $H_{x,y}(k,p)$ the resulting region (see Figure \ref{fbc} for an example).

The main enumeration result of this paper is the following.

\begin{figure}[h]
\centerline{
\hfill
{\includegraphics[width=0.40\textwidth]{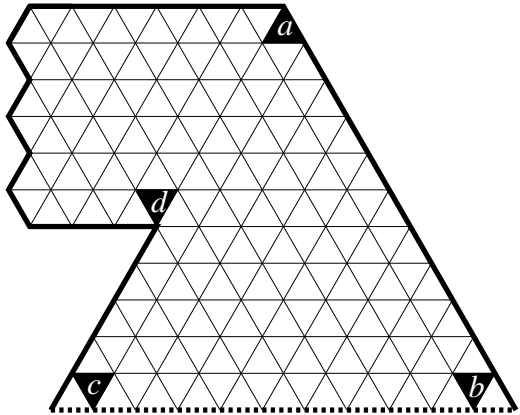}}
\hfill
}
%\vskip-0.1in
\caption{\label{fbcc} Applying free boundary Kuo condensation to the region $\hat{F}_{x,z,k,p}$.}
%\vskip-0.15in
\label{fcb}
\end{figure}

\begin{theo}
\label{tbc}
For any non-negative integers $x$, $y$, $k$ and $p$, we have 
%%
%\begin{align}
%&
%\M_{\,-\,,\,|\,}(H_{x,y}(k,p))=
%%{\frac{y}{2}-1\choose\frac{y-k}{2}-1} 
%\prod_{i=1}^{\frac{y-k}{2}-1}\frac{\frac{k}{2}+i}{i}
%\prod_{i=0}^{p-1}\frac{\left(\frac{x}{2}+\frac{y}{2}+p-2i\right)_{\frac{y-k}{2}-1}}
%                      {\left(\frac{x}{2}+\frac{k}{2}+p-2i\right)_{\frac{y-k}{2}-1}}
%%\nonumber
%%\\
%%&
%\prod_{i=1}^{\frac{y-k}{2}-1}\prod_{j=2}^i\frac{k+i+j-1}{i+j-1}
%\nonumber
%\\
%&\ \ \ \ \ \ \ \ \ \ \ \ \ 
%\times
%\dfrac
%{\prod_{j=1}^{k/2}\left(\frac{x}{2}-\frac{k}{2}-p+2j-1\right)_{y+k-4j+3}
%\prod_{j=1}^{(y-k)/2}\left(\frac{x}{2}+\frac{k}{2}-p+j\right)_{y-k-2j+1}}
%{\prod_{j=1}^{k/2}(2j-1)_{y+k-4j+3}
%\prod_{j=1}^{(y-k)/2}(k+j)_{y-k-2j+1}},
%\label{ebi}
%\end{align}
%%
%
\begin{align}
&
\M_{\,-\,,\,|\,}(H_{2x,2y}(2k,p))=
%{y-1\choose\y-k-1} 
\prod_{i=1}^{y-k-1}\frac{k+i}{i}
\prod_{i=0}^{p-1}\frac{(x+y+p-2i)_{y-k-1}}
                      {(x+k+p-2i)_{y-k-1}}
%\nonumber
%\\
%&
\prod_{i=1}^{y-k-1}\prod_{j=2}^i\frac{2k+i+j-1}{i+j-1}
\nonumber
\\
&\ \ \ \ \ \ \ \ \ \ \ \ \ 
\times
\dfrac
{\prod_{j=1}^{k}(x-k-p+2j-1)_{2y+2k-4j+3}
\prod_{j=1}^{y-k}(x+k-p+j)_{2y-2k-2j+1}}
{\prod_{j=1}^{k}(2j-1)_{2y+2k-4j+3}
\prod_{j=1}^{y-k}(2k+j)_{2y-2k-2j+1}},
\label{ebi}
\end{align}
where all products for which the index limits are out of order are taken to be $1$.
\end{theo}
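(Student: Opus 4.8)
The plan is to prove Theorem~\ref{tbc} by induction on the parameters, using the free boundary Kuo condensation recurrence \eqref{ebe} of Corollary~\ref{tbb} as the engine. The first step is to reduce the problem from the original region to a more tractable one. As indicated in Figures~\ref{fbc} and~\ref{fca}, the horizontally and vertically symmetric lozenge tilings of the carpenter's butterfly region $H_{2x,2y}(2k,p)$ are in weight-preserving bijection with the (unconstrained) tilings of a ``flashlight region'' $F_{x,z,k,p}$, where $z=y-k$; folding across the vertical symmetry axis and then across the horizontal one turns the doubly symmetric tilings into tilings of a quarter-size region with free boundary along the two fold lines (this is exactly the situation --- tilings with part of the boundary free --- that the $\M_f$ formalism and the free boundary recurrences were built to handle). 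One then passes to the reduced flashlight region $\hat F_{x,z,k,p}$ by stripping off forced lozenges. So the real content is a product formula for $\M_f$ of the reduced flashlight region, and $\M_{\,-\,,\,|\,}(H_{2x,2y}(2k,p))$ equals that quantity up to an explicit, easily-computed power-of-two/weight factor coming from the folding.

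The second step is to set up the induction. I would apply the four-term recurrence \eqref{ebe} to the graph underlying $\hat F_{x,z,k,p}$ with a judicious choice of the four boundary vertices $a,b,c,d$ on the free-boundary face (Figure~\ref{fcb} schematically shows this application), checking that the separation hypothesis of Corollary~\ref{tbb} --- that $S$ is $b,d$-separated --- holds for this configuration. The six terms appearing in \eqref{ebe} should each be identifiable, after removing forced lozenges, with reduced flashlight regions with shifted parameters: deleting the chosen vertices changes $x$, $z$, $k$, or $p$ by $1$ (or peels off a lobe). This yields a recurrence expressing the desired quantity for parameters $(x,z,k,p)$ in terms of the same quantity for strictly smaller parameters. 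The base cases --- small values of $z$, $k$, or $p$, and the degenerate cases where index limits go out of order so that regions either have a unique tiling or reduce to hexagons whose tiling counts are given by \eqref{eaa} --- must be verified directly.

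The third step, which is the routine-but-lengthy part, is to confirm that the conjectured right-hand side of \eqref{ebi} (pulled back through the folding correspondence to a closed form for $\hat F_{x,z,k,p}$) actually satisfies the same recurrence. This is a hypergeometric identity: one divides the putative formula evaluated at the shifted parameters by its value at $(x,z,k,p)$, obtains ratios of Pochhammer symbols, and must check that the resulting rational-function identity --- of the shape $1 = R_1 + R_2$ or similar --- holds identically. This reduces, after clearing denominators, to a polynomial identity in $x,y,k,p$ that can in principle be verified term by term (or by specializing enough integer values, since the degree is bounded).

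The main obstacle I anticipate is the bookkeeping in the second step: one must choose the four vertices $a,b,c,d$ so that \emph{every} one of the six terms in \eqref{ebe}, after deletion and removal of forced lozenges, collapses to a flashlight-type region \emph{within the same family} (possibly with one of the degenerate parameter specializations), and so that no term produces a region outside the family or a non-planar/ill-separated configuration. Getting a single choice of $a,b,c,d$ that simultaneously makes all six terms behave is the delicate combinatorial heart of the argument; once the recurrence is correctly set up, verifying that \eqref{ebi} satisfies it is a finite hypergeometric computation, and the folding reduction in the first step is conceptually straightforward given the machinery already developed.
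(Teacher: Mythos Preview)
Your overall strategy---reduce to the flashlight region, apply free-boundary Kuo condensation, obtain a recurrence among flashlight regions, and verify by induction that the product formula satisfies it---is exactly the paper's approach. However, several of the details you give are wrong in ways that would derail the argument if you tried to carry them out.

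First, the engine is \eqref{ebd}, not \eqref{ebe}. With the vertex choice in Figure~\ref{fcb}, the term $\M_f(G)$ in \eqref{ebd} is $\M_f(\hat F_{x,z,k,p})$ itself, and the other seven factors become flashlight regions with strictly smaller value of $x+2z$; this is what drives the induction (see \eqref{ecc}--\eqref{ecd}). In \eqref{ebe} every factor has an odd number of vertices deleted, so none of them is the full region, and with the Figure~\ref{fcb} placement the odd deletions do not collapse to flashlight regions. You would need an entirely different choice of $a,b,c,d$ to make \eqref{ebe} usable here, and you have not supplied one. Relatedly, there are four products (eight regions) in each of \eqref{ebd} and \eqref{ebe}, not six; and Corollary~\ref{tbb} requires $S$ to be both $a,c$-separated and $b,d$-separated, not just the latter.

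Second, the folding reduction is not quite as you describe. Along the vertical symmetry axis the lozenges are \emph{forced} (Figure~\ref{fbb}), so after removing them the left zig-zag side of the flashlight is a constrained boundary; only the horizontal axis becomes a free boundary. Consequently \eqref{eca} is an exact equality with no extra power-of-two weight factor. Finally, the base cases the paper actually checks are $x=0$, $z=0$, and $z=1$ (the last two reduce to the symmetric plane partition count \eqref{ecee}), and the induction is on $x+2z$; your sketch of the base cases is vaguer than it needs to be.
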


\section{Proof of Theorem \ref{tbc}}

Since, as we mentioned, each tiling with these symmetries must contain the vertical lozenges along the vertical symmetry axis (these are shaded in Figure \ref{fbb}), it follows that the horizontally and vertically symmetric lozenge tilings of $H_{x,y}(k,p)$ can be identified with tilings of the subregion of $H_{x,y}(k,p)$ that is to the right of the shaded vertical lozenges and above the horizontal symmetry axis, with the specification that the boundary along the horizontal symmetry axis is free, i.e. lozenges are allowed to protrude out halfway across it (the region obtained this way for the example on the left in Figure \ref{fca} is shown on the right in the same figure). Denote by $F_{x,z,k,p}$ the region of this type which has the dimensions indicated in Figure \ref{fca}, and call it a {\it flashlight region}. It is defined for all non-negative integers $x,z,k,p$ with $x+z\geq k+p$ (so that the dent on the lower left does not go through the boundary on the right).

Then we have
\begin{equation}
\M_{\,-\,,\,|\,}(H_{2x,2y}(2k,p))=\M_f(F_{x,y-k,k,p}),
\label{eca}
\end{equation}
and Theorem \ref{tbc} will follow from the following result.

\begin{figure}[h]
\vskip-0.1in
\centerline{
\hfill
{\includegraphics[width=0.35\textwidth]{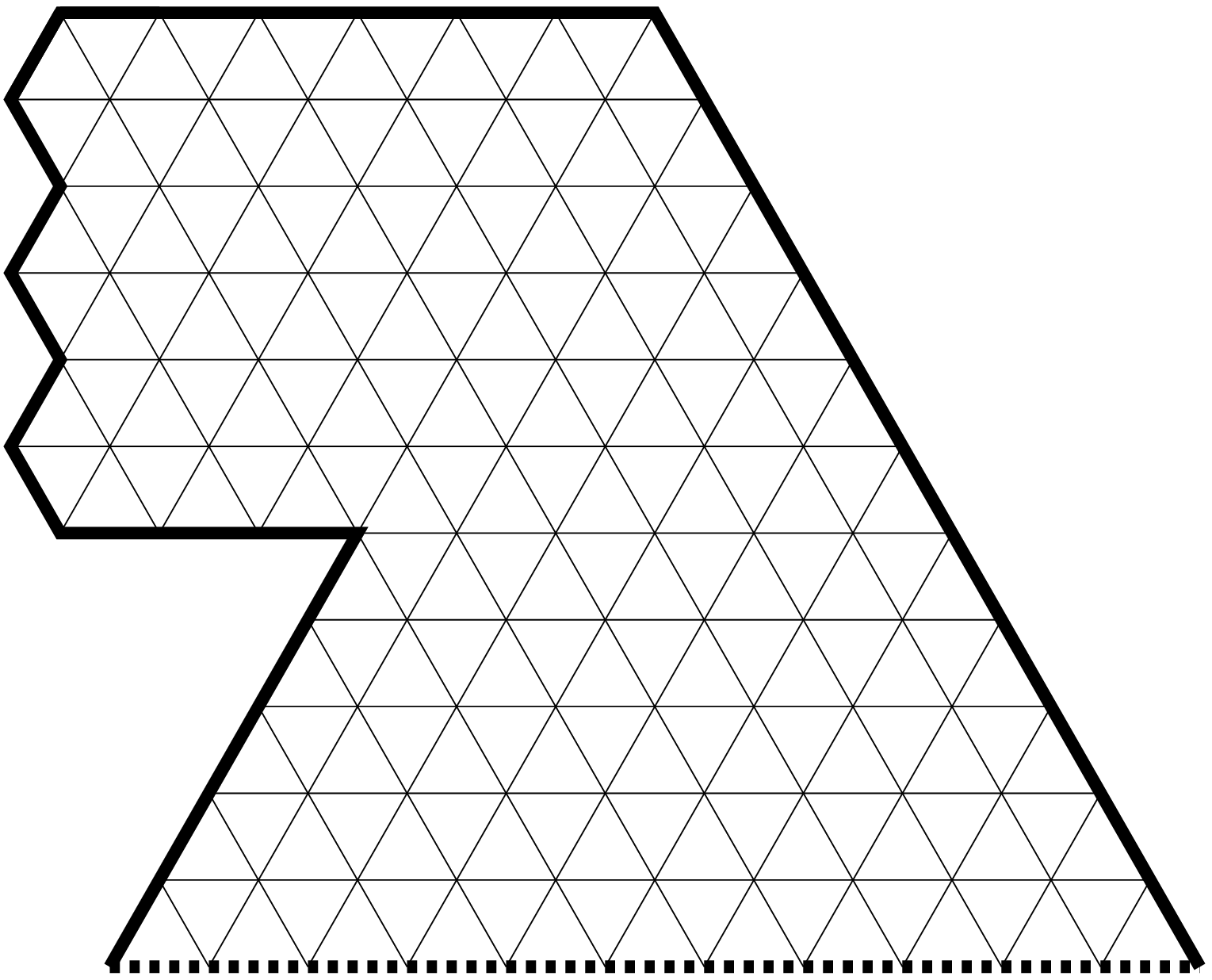}}
\hfill
{\includegraphics[width=0.35\textwidth]{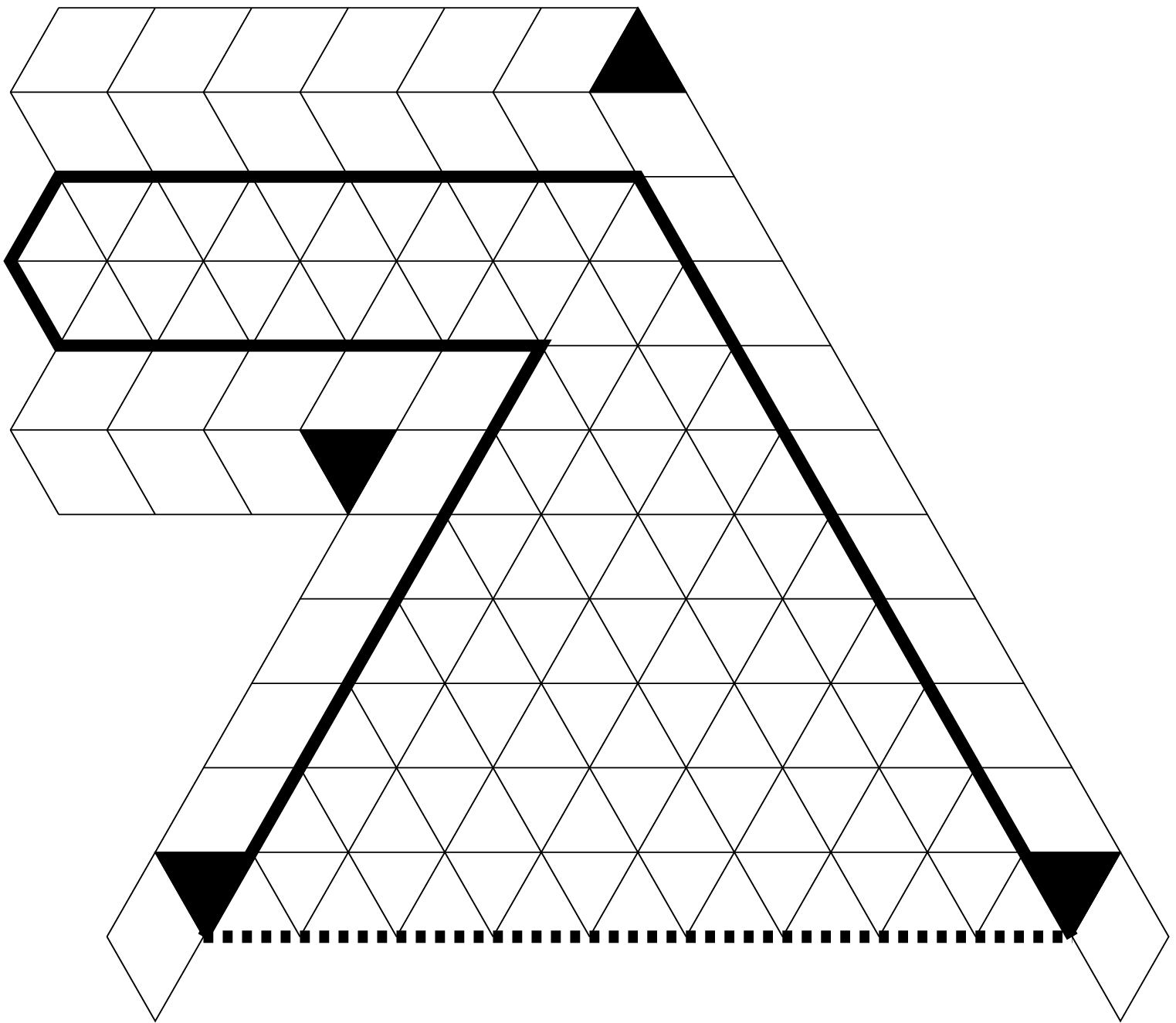}}
\hfill
}
\vskip0.1in
\centerline{
\hfill
{\includegraphics[width=0.35\textwidth]{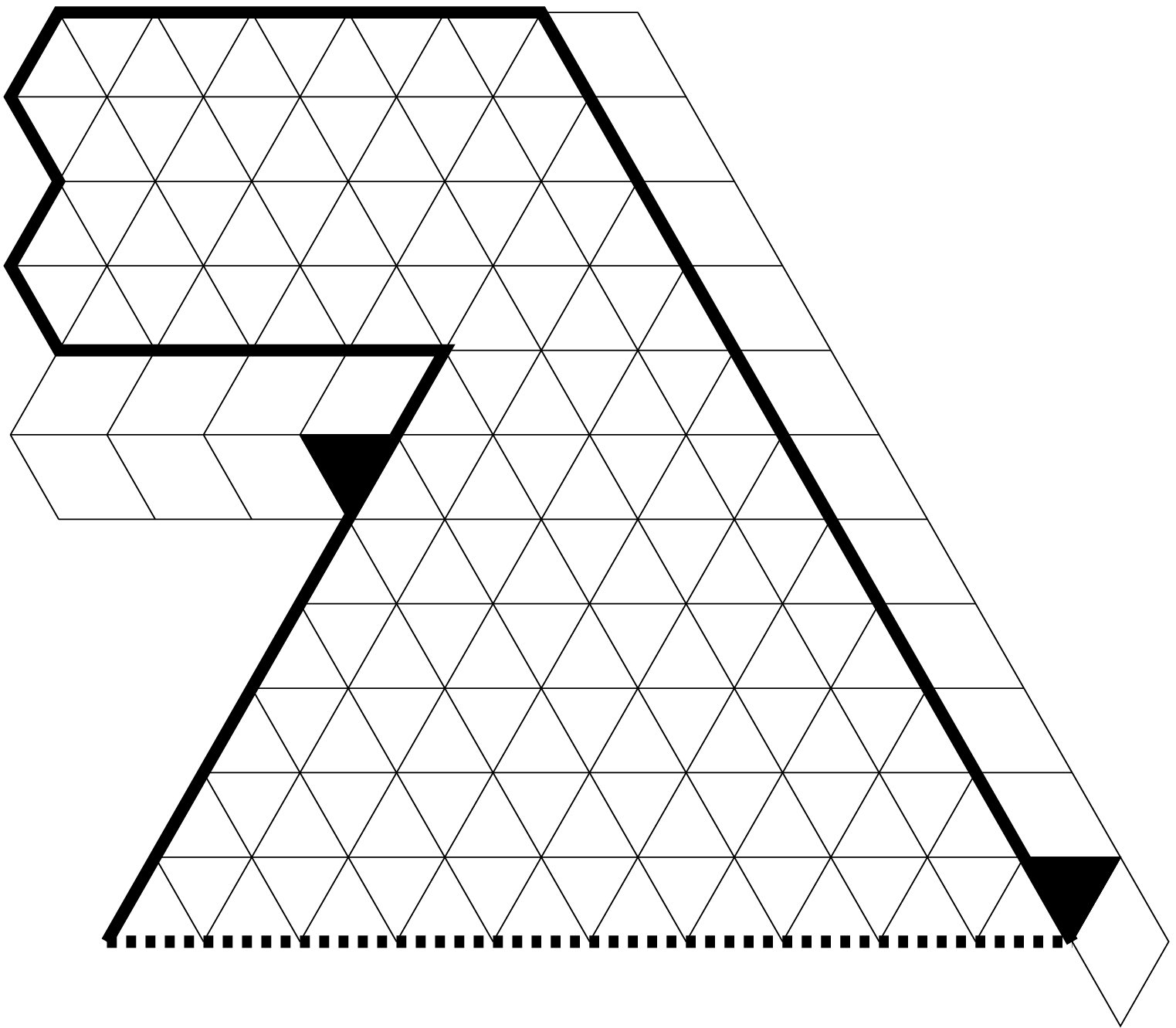}}
\hfill
{\includegraphics[width=0.35\textwidth]{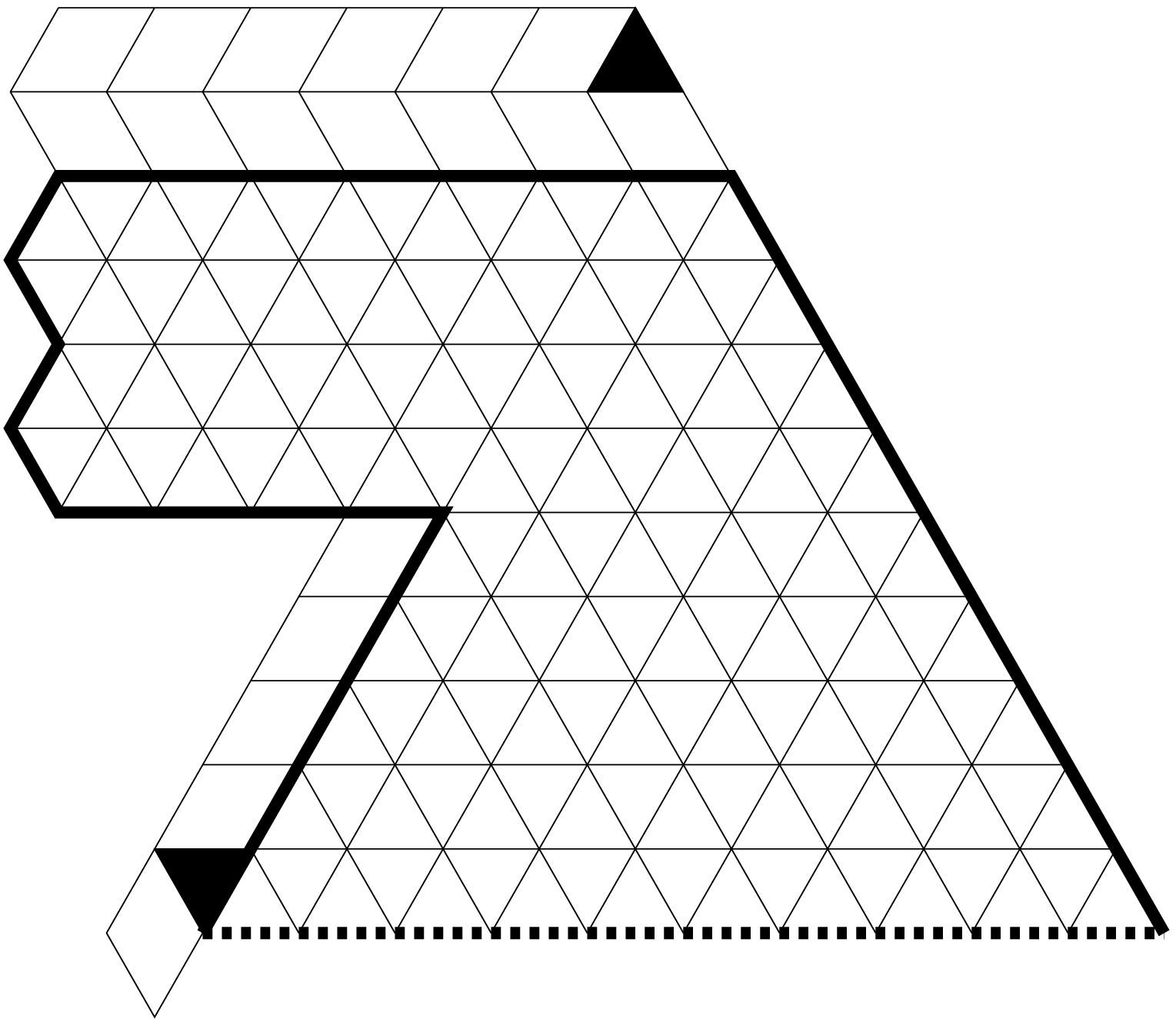}}
\hfill
}
\caption{The $\hat{F}$-regions on the left in \eqref{ecc}.}
\label{fcc}
\end{figure}

\begin{prop}
\label{tca}
For non-negative integers $x,z,k,p$ we have
%%
%\begin{align}
%&
%\M_f(F_{x,y,k,p})=
%{y-1\choose y-k-1} 
%\prod_{i=0}^{p-1}\frac{(x+y+p-2i)_{y-k-1}}
%                      {(x+k+p-2i)_{y-k-1}}
%%\nonumber
%%\\
%%&
%\prod_{i=1}^{y-k-1}\prod_{j=2}^i\frac{2k+i+j-1}{i+j-1}
%\nonumber
%\\
%&\ \ \ \ \ \ \ \ \ \ \ \ \ 
%\times
%\dfrac
%{\prod_{j=1}^{k}(x-k-p+2j-1)_{2y+2k-4j+3}
%\prod_{j=1}^{y-k}(x+k-p+j)_{2y-2k-2j+1}}
%{\prod_{j=1}^{k}(2j-1)_{2y+2k-4j+3}
%\prod_{j=1}^{y-k}(k+j)_{2y-2k-2j+1}}.
%\label{ecb}
%\end{align}
%%
%
\begin{align}
&
\M_f(F_{x,z,k,p})=
%{z+k-1\choose z-1} 
\prod_{i=1}^{z-1}\frac{k+i}{i}
\prod_{i=0}^{p-1}\frac{(x+z+k+p-2i)_{z-1}}
                      {(x+k+p-2i)_{z-1}}
%\nonumber
%\\
%&
\prod_{i=1}^{z-1}\prod_{j=2}^i\frac{2k+i+j-1}{i+j-1}
\nonumber
\\
&\ \ \ \ \ \ \ \ \ \ \ \ \ 
\times
\dfrac
{\prod_{j=1}^{k}(x-k-p+2j-1)_{2z+4k-4j+3}
\prod_{j=1}^{z}(x+k-p+j)_{2z-2j+1}}
{\prod_{j=1}^{k}(2j-1)_{2z+4k-4j+3}
\prod_{j=1}^{z}(2k+j)_{2z-2j+1}},
\label{ecb}
\end{align}
where all products for which the index limits are out of order are taken to be $1$.

\end{prop}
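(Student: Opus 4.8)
The plan is to prove Proposition~\ref{tca} by induction on the pair $(z,p)$, ordered lexicographically (or equivalently on some linear combination like $z+p$), using the four-term free-boundary recurrence \eqref{ebe} of Corollary~\ref{tbb} applied to a suitably bipartite planar dual graph of a reduced flashlight region $\hat F_{x,z,k,p}$. First I would recall from the referenced figures the passage from the flashlight region $F_{x,z,k,p}$ to its reduced version $\hat F_{x,z,k,p}$ (Figure~\ref{fca}), obtained by the standard forced-lozenge reductions along the dent; this replaces $\M_f(F_{x,z,k,p})$ by $\M_f(\hat F_{x,z,k,p})$ up to an explicit monomial factor coming from the forced lozenges. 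I would then fix the four vertices $a,b,c,d$ on the free-boundary face of (the planar dual of) $\hat F_{x,z,k,p}$ as indicated in Figures~\ref{fcb} and~\ref{fcc}: $a,b,c$ chosen among the ``white'' boundary vertices and $d$ among the ``black'' ones so that the deletions $\hat F\setminus\{a\}$, $\hat F\setminus\{b\}$, etc., all reduce — again after removing forced lozenges — to smaller flashlight regions $\hat F_{x',z',k',p'}$ with $z'<z$ or $p'<p$. The key structural point to verify carefully is that $S$ (the set of free vertices along the symmetry-axis boundary) is both $a,c$-separated and $b,d$-separated, so that \eqref{ebe} is legitimately available; this should follow from the geometry — any three disjoint paths realizing the forbidden configuration would have to cross the dent or the outer boundary.

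The heart of the argument is then purely bookkeeping: identify each of the six terms $\M_f(\hat F\setminus\{b\})$, $\M_f(\hat F\setminus\{a,c,d\})$, $\M_f(\hat F\setminus\{d\})$, $\M_f(\hat F\setminus\{a,b,c\})$, $\M_f(\hat F\setminus\{a\})$, $\M_f(\hat F\setminus\{b,c,d\})$, $\M_f(\hat F\setminus\{a,b,d\})$, $\M_f(\hat F\setminus\{c\})$ with explicit (monomial factor)$\,\times\,\M_f(F_{x^\ast,z^\ast,k^\ast,p^\ast})$ for appropriate parameter shifts, and then substitute the conjectured product formula \eqref{ecb} for each of these smaller regions (valid by the inductive hypothesis). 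After clearing the common monomial prefactor, \eqref{ebe} becomes an identity among products of Pochhammer symbols in $x,z,k,p$; one checks it by the usual technique of forming the ratio of the two sides, cancelling, and reducing to a rational-function identity in the shift parameters — the telescoping of the $\prod_{i=1}^{z-1}\frac{k+i}{i}$, $\prod_{i=0}^{p-1}(\cdots)$, and double-product factors against the Pochhammer quotients. I would organize this so that each of the eight deleted-vertex regions contributes a clean shift (typically $z\mapsto z-1$, or $p\mapsto p-1$ with a compensating change in $x$), making the final rational identity short.

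The main obstacle I expect is twofold. First, correctly reading off the parameter shifts: deleting a boundary vertex of $\hat F$ forces a cascade of lozenges, and tracking exactly which smaller flashlight region results — and with what monomial weight — near the ``corner'' where the dent meets the free boundary requires care, since several of the eight regions degenerate (one of the index limits goes ``out of order'' and the product is declared $1$) precisely in the base cases $z=0$ or $k=0$ or $p=0$, and these degeneracies must be checked to be consistent with \eqref{ebe}. Second, the base of the induction: I need the formula \eqref{ecb} to hold when $z=0$ (the flashlight degenerates, essentially, to a semihexagon-type region with free boundary, whose tiling count is a classical Proctor/Lindström–Gessel–Viennot evaluation) and when $p=0$ (where $F_{x,z,k,0}$ should connect to the already-known hexagon-with-bowtie enumeration, via \eqref{eca} with $p=0$ and the symmetric-self-complementary count, or can be verified directly). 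Handling these boundary cases, together with the small values $k=0,1$, and making sure the recurrence \eqref{ebe} is non-degenerate there (i.e., that the term one solves for does not itself vanish), is where the real work lies; the bulk-induction Pochhammer manipulation, while lengthy, is routine.
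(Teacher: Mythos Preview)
Your overall architecture --- apply Corollary~\ref{tbb} to the reduced flashlight region, read off a recurrence in the $F$-parameters, verify base cases, and check that the product formula satisfies the recurrence --- is indeed the paper's approach, but several of your specific choices differ from what actually works and at least one of them is a genuine obstacle.

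First, the paper uses the even-deletion identity \eqref{ebd}, not \eqref{ebe}. With the vertices $a,b,c,d$ placed as in Figure~\ref{fcb}, each of the eight even subsets $\emptyset,\{a,b,c,d\},\{b,d\},\{a,c\},\{a,d\},\{b,c\},\{a,b\},\{c,d\}$ leaves, after forced-lozenge removal, an honest flashlight region; the resulting recurrence is
\[
\M_f(F_{x,z,k,p})\M_f(F_{x,z-2,k+1,p+1})+\M_f(F_{x-1,z-1,k+1,p})\M_f(F_{x+1,z-1,k,p+1})
=\M_f(F_{x+1,z-2,k+1,p})\M_f(F_{x-1,z,k,p+1})+\M_f(F_{x,z-1,k,p})\M_f(F_{x,z-1,k+1,p+1}).
\]
Your plan to use \eqref{ebe} with three white and one black vertex is not carried out concretely enough to evaluate; you would have to exhibit a placement for which all eight odd-deletion regions are again flashlight regions, and you have not done so.

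Second, and more seriously, your proposed induction on $(z,p)$ or on $z+p$ cannot work against the recurrence above: four of the seven ``smaller'' regions have their $p$-parameter \emph{increased} (to $p+1$), and one of them, $F_{x-1,z,k,p+1}$, even keeps $z$ fixed. The correct inductive quantity is $x+2z$: every region on the right and the companion region on the left have $x+2z$ strictly smaller than that of $F_{x,z,k,p}$. This is the key organizational insight, and it is why the paper's base cases are $x=0$, $z=0$, and $z=1$.

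Third, your proposed base case $p=0$ is not available independently: Theorem~\ref{tbc} is \emph{deduced from} Proposition~\ref{tca} via \eqref{eca}, not the other way around, and ``verify directly'' for $p=0$ is a three-parameter product formula in its own right. The paper instead reduces $z=0$ and $z=1$ to the symmetric-plane-partition count $SPP(2k,2k,\cdot)$ and $SPP(2k+1,2k+1,\cdot)$, which are classical, and handles $x=0$ trivially.

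A minor point: the passage $F\to\hat F$ removes only forced lozenges of weight~$1$, so $\M_f(F_{x,z,k,p})=\M_f(\hat F_{x,z,k,p})$ exactly; there is no monomial prefactor to track.
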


\begin{figure}[h]
\centerline{
\hfill
{\includegraphics[width=0.35\textwidth]{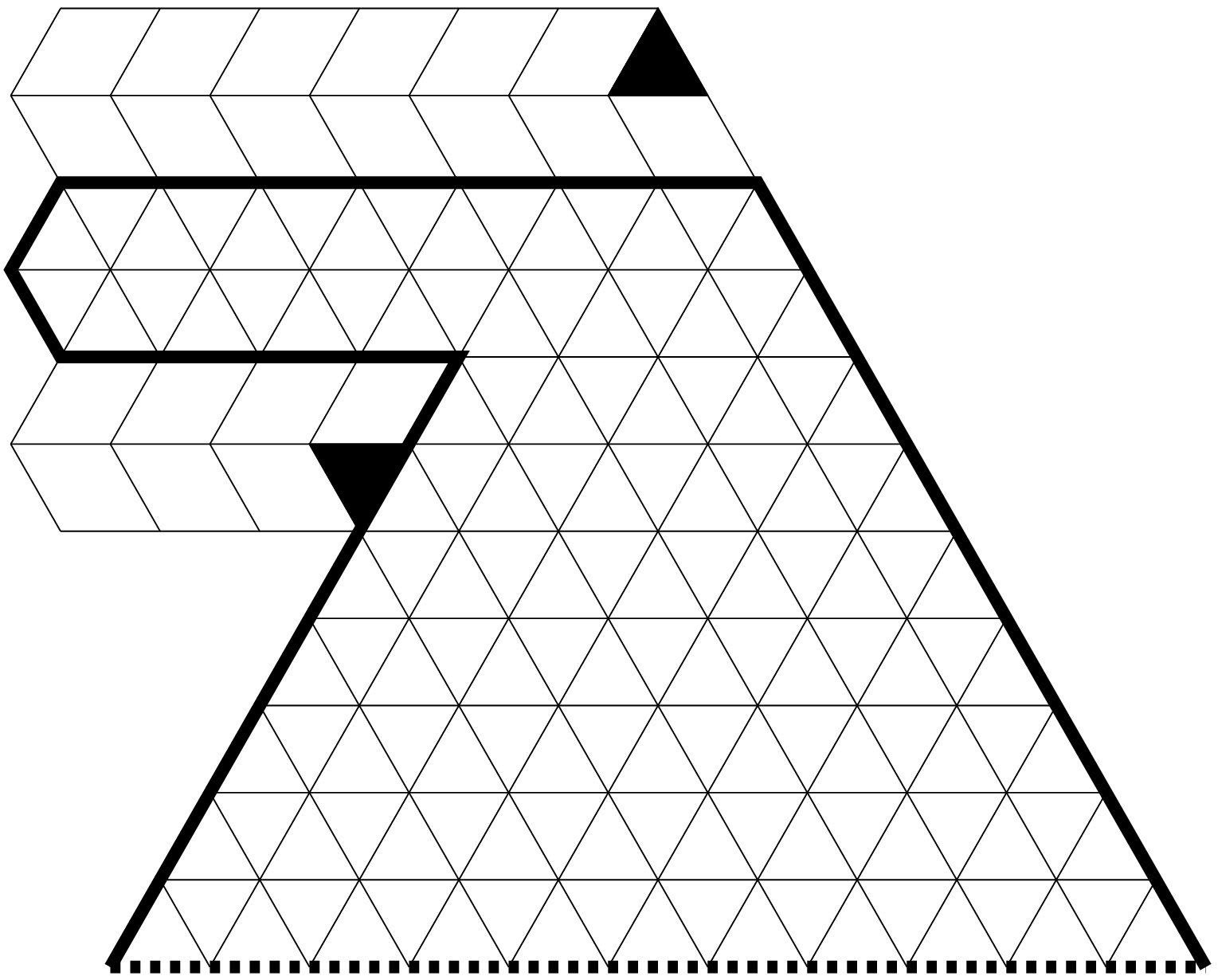}}
\hfill
{\includegraphics[width=0.35\textwidth]{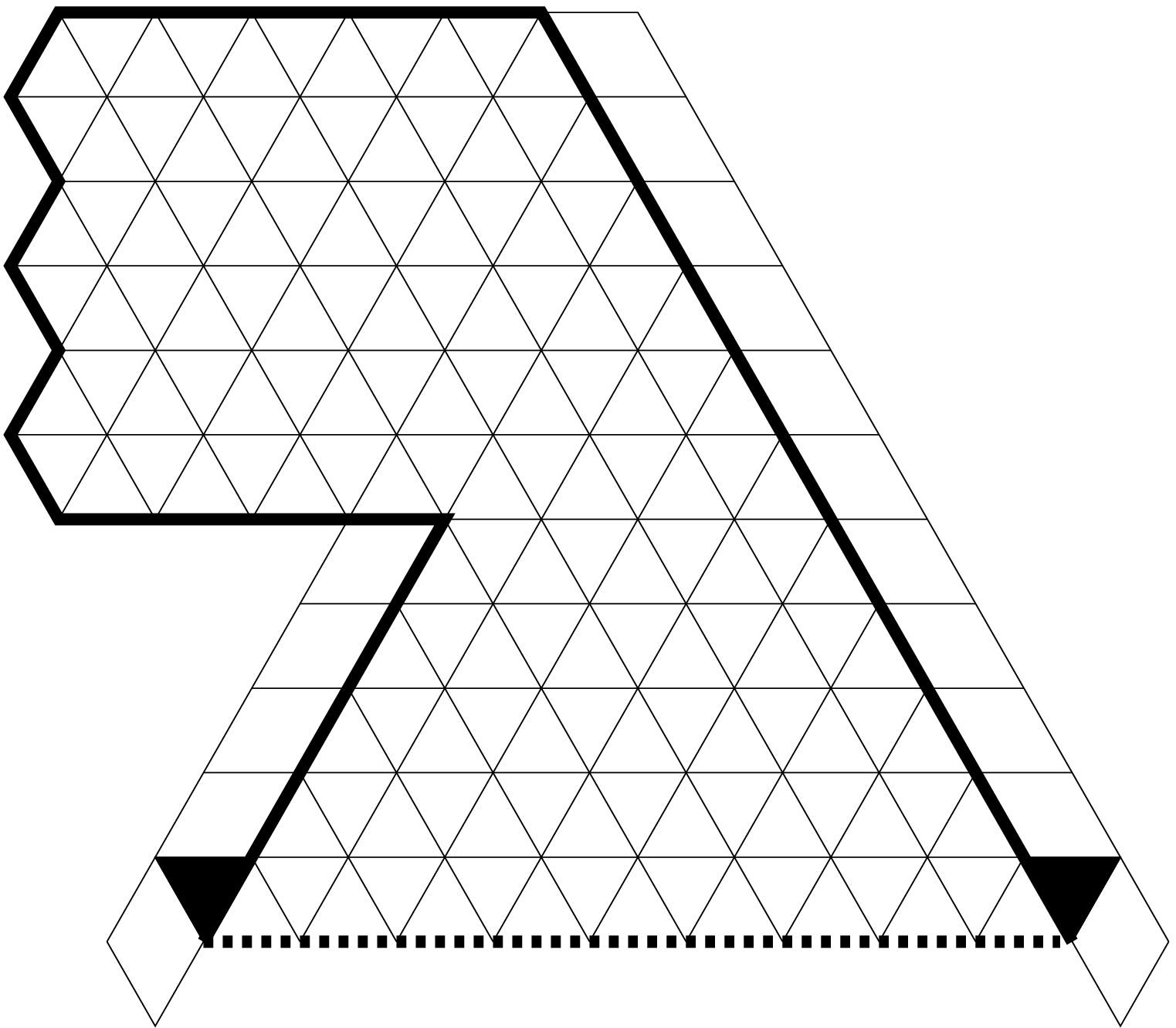}}
\hfill
}
\vskip0.1in
\centerline{
\hfill
{\includegraphics[width=0.35\textwidth]{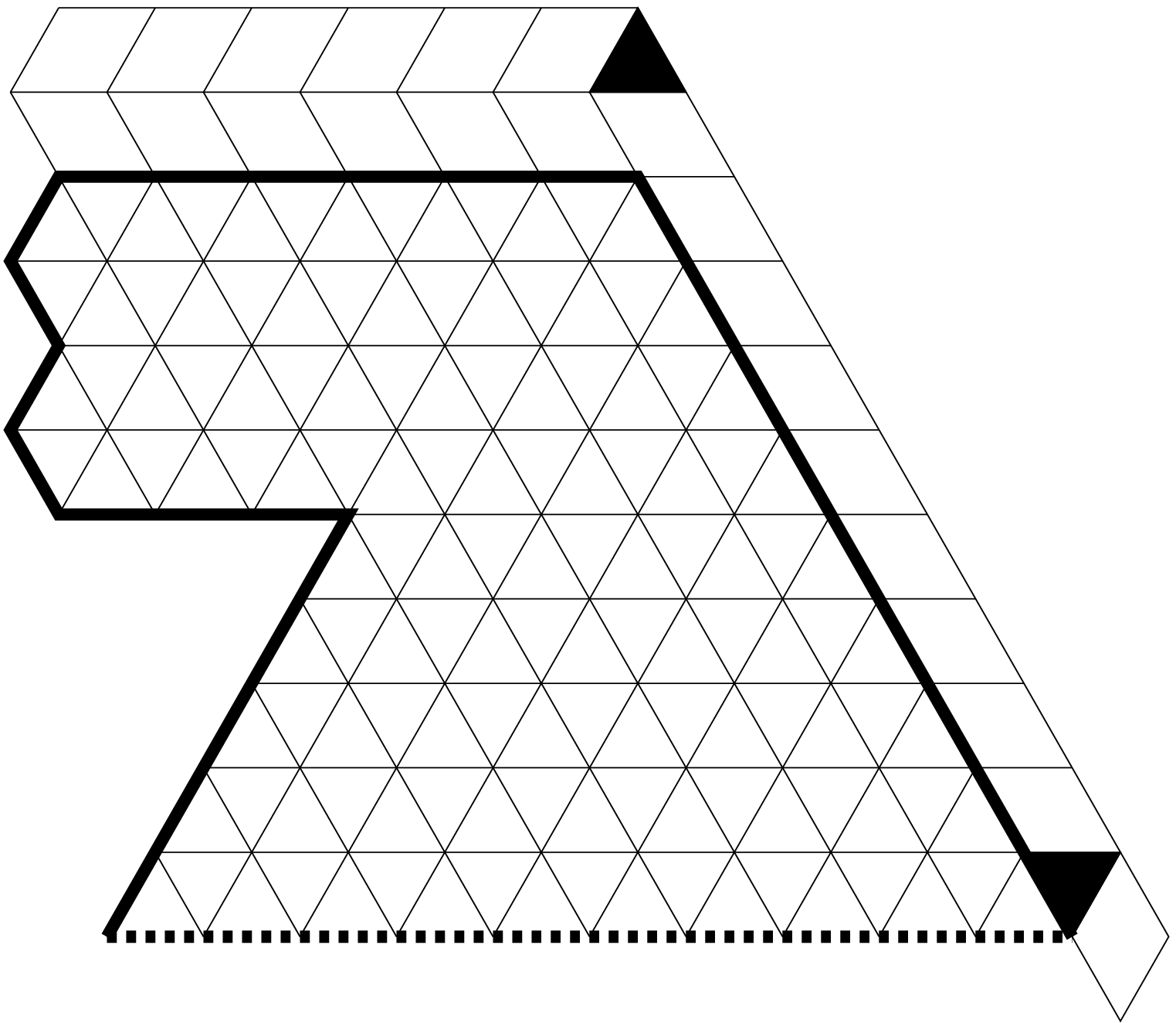}}
\hfill
{\includegraphics[width=0.35\textwidth]{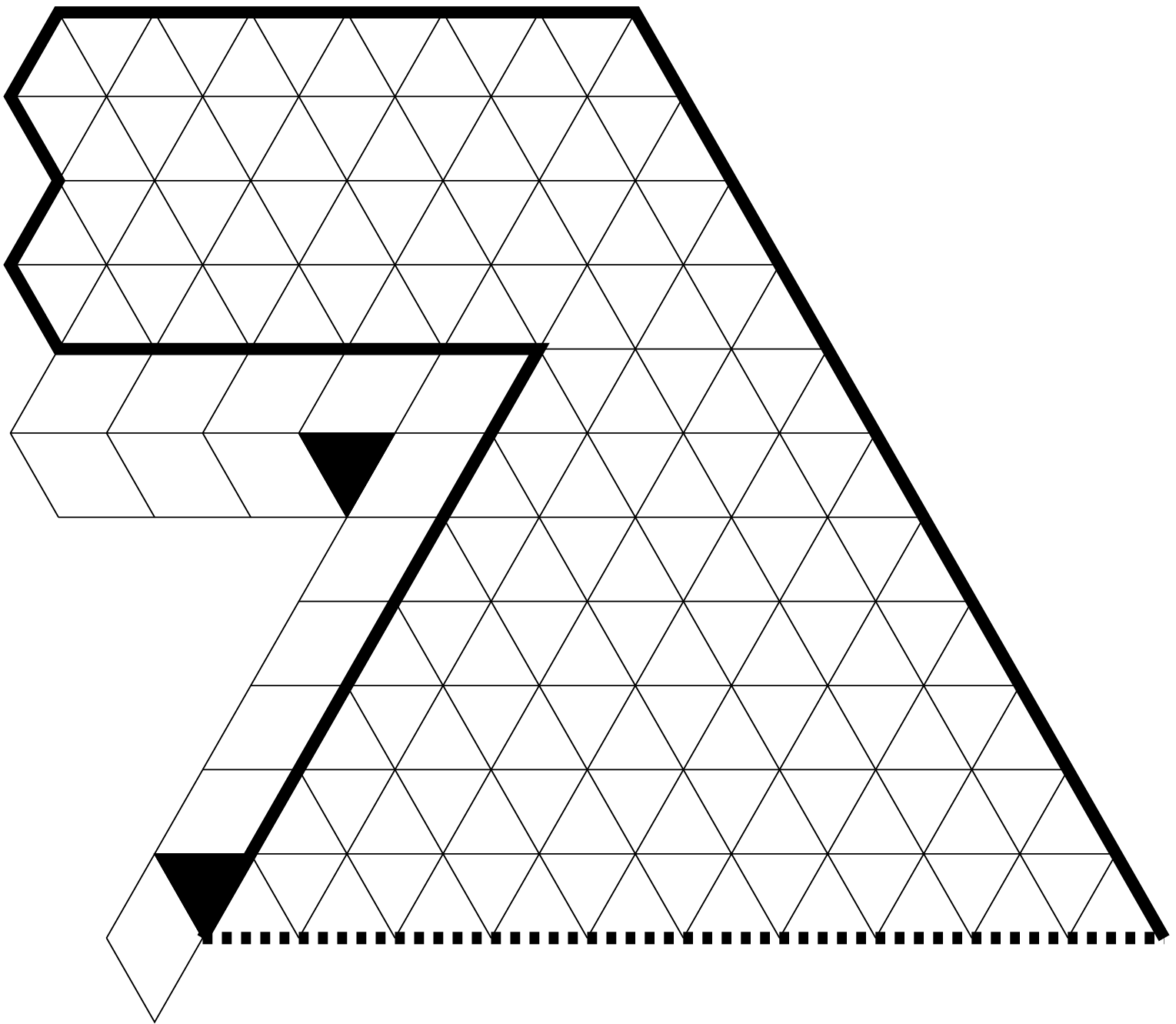}}
\hfill
}
\caption{The $F$-regions on the right in \eqref{ecc}.}
\label{fcd}
\end{figure}

\begin{proof} Note that if $z>0$, the zig-zag boundary portion of $F_{x,z,k,p}$ is non-empty, and $x$ lozenges along the top as well as $k+p$ lozenges just above the horizontal portion of the notch are forced. Denote by $\hat{F}_{x,z,k,p}$ the region obtained from $F_{x,z,k,p}$ after removing these forced lozenges (see Figure \ref{fca}). Then we clearly have

\begin{equation}
\M_f(F_{x,z,k,p})=\M_f(\hat{F}_{x,z,k,p}).
\label{ecbb}
\end{equation}

Identify the region $\hat{F}_{x,z,k,p}$ with its planar dual graph. Note that if we choose $a,b,c,d$ as indicated in Figure \ref{fcb}, then the set $S$ of free vertices (which correspond to the up-pointing unit triangles resting on the bottom dotted boundary) is both $a,c$-separated and $b,d$-separated. Therefore we can apply Corollary \ref{tbb}. When we do so, all the subregions obtained by removing from $\hat{F}_{x,z,k,p}$ the subsets of $\{a,b,c,d\}$ that show up in \eqref{ebd} turn out to be, after removing forced lozenges, flashlight regions of various arguments (see Figures \ref{fcc} and \ref{fcd}). We therefore obtain from \eqref{ebd} that

\begin{align}
&
\M_f(\hat{F}_{x,z,k,p})\M_f(\hat{F}_{x,z-2,k+1,p+1})
+
\M_f(\hat{F}_{x-1,z-1,k+1,p})\M_f(\hat{F}_{x+1,z-1,k,p+1})
\nonumber
\\
&\ \ \ \ \ \ \ \ \ \ \ \ \ \ 
=
\M_f(\hat{F}_{x+1,z-2,k+1,p})\M_f(\hat{F}_{x-1,z,k,p+1})
+
\M_f(\hat{F}_{x,z-1,k,p})\M_f(\hat{F}_{x,z-1,k+1,p+1}),
\label{ecc}
\end{align}
which gives, using \eqref{ecbb}, that

\begin{figure}[h]
\centerline{
\hfill
{\includegraphics[width=0.15\textwidth]{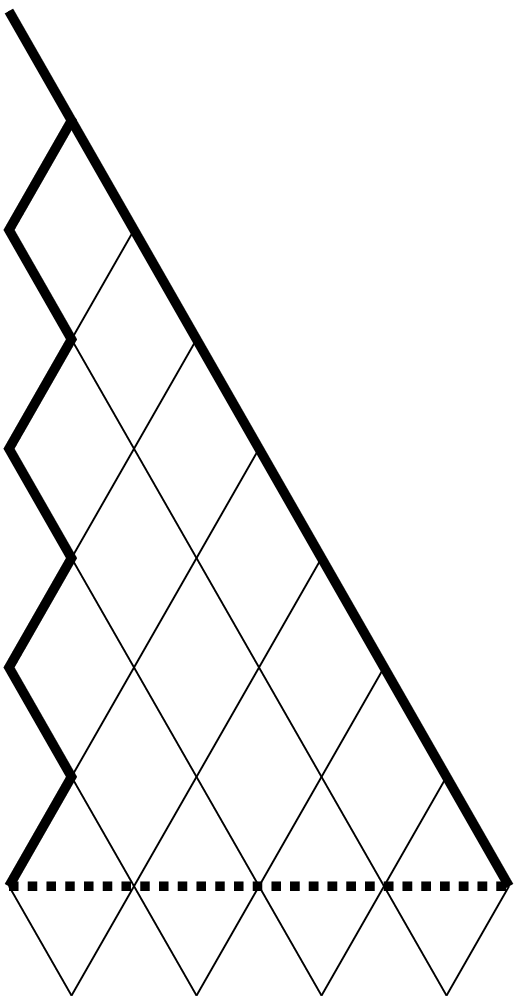}}
\hfill
{\includegraphics[width=0.35\textwidth]{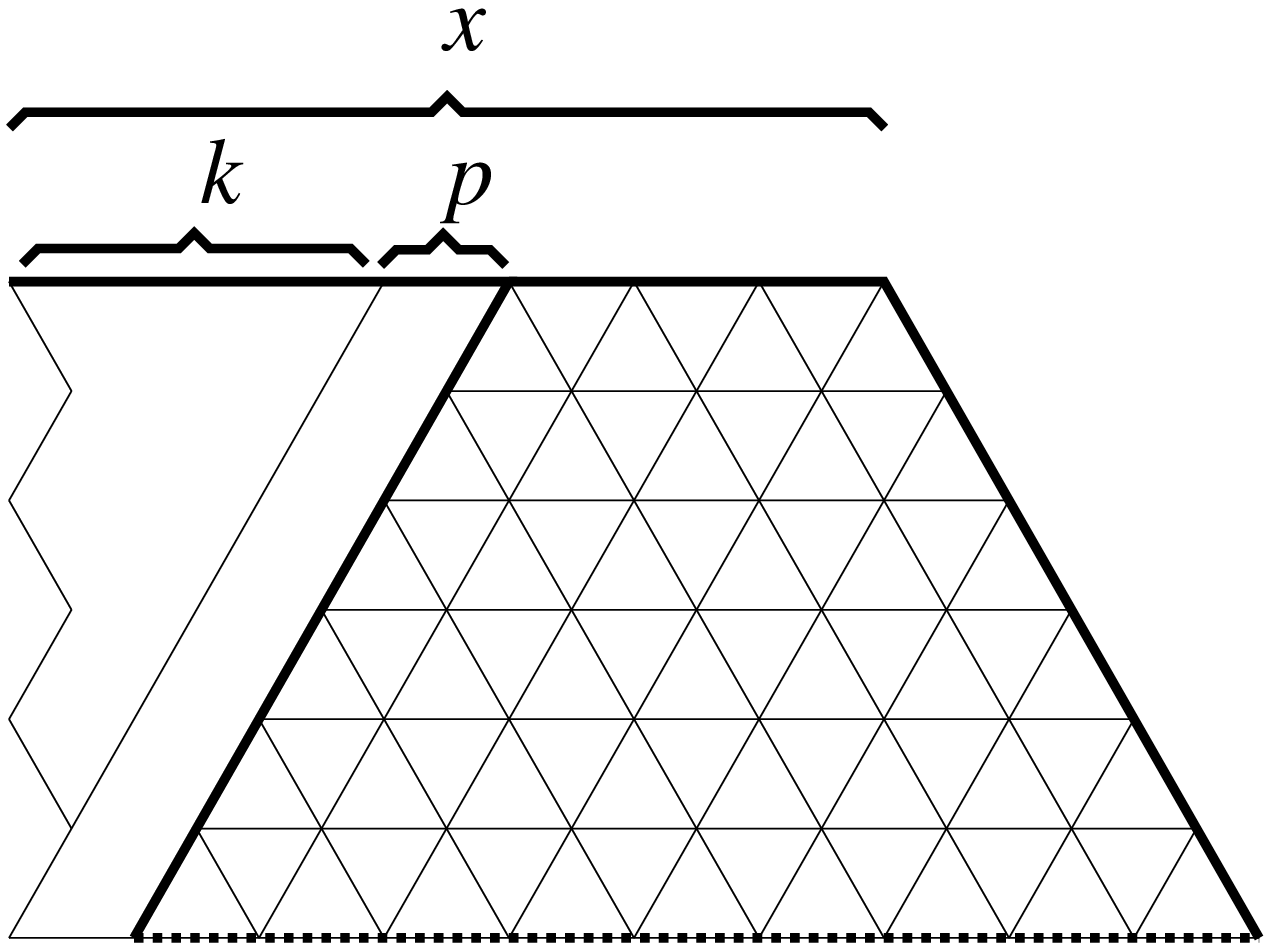}}
\hfill
}
\caption{The base cases $x=0$ (left) and $z=0$ (right).}
\label{fce}
\end{figure}

\begin{figure}[h]
\centerline{
\hfill
{\includegraphics[width=0.35\textwidth]{}}
\hfill
}
\caption{The base case $z=1$.}
\label{fcf}
\end{figure}

\begin{align}
&
\M_f(F_{x,z,k,p})\M_f(F_{x,z-2,k+1,p+1})
+
\M_f(F_{x-1,z-1,k+1,p})\M_f(F_{x+1,z-1,k,p+1})
\nonumber
\\
&\ \ \ \ \ \ \ \ \ \ \ \ \ \ 
=
\M_f(F_{x+1,z-2,k+1,p})\M_f(F_{x-1,z,k,p+1})
+
\M_f(F_{x,z-1,k,p})\M_f(F_{x,z-1,k+1,p+1}).
\label{ecd}
\end{align}
%

%and rewrite it (using also \eqref{ecbb}) as
%
%
%\begin{align}
%&
%\M_f(\hat{F}_{x,z,k,p})\M_f(\hat{F}_{x,z-2,k+1,p+1})
%=
%-
%\M_f(\hat{F}_{x-1,z-1,k+1,p})\M_f(\hat{F}_{x+1,z-1,k,p+1})
%\nonumber
%\\
%&\ \ \ \ \ \ \ \ \ \ \ \ \ \ 
%=
%\M_f(\hat{F}_{x+1,z-2,k+1,p})\M_f(\hat{F}_{x-1,z,k,p+1})
%+
%\M_f(\hat{F}_{x,z-1,k,p})\M_f(\hat{F}_{x,z-1,k+1,p+1}).
%\label{ecc}
%\end{align}
%%

%%
%\begin{align}
%&
%\M_f(F_{x,y,k,p})\M_f(F_{x,y-1,k-1,p-1})
%=
%\M_f(F_{x,y,k-1,p-1})\M_f(F_{x,y-1,k,p})
%\nonumber
%\\
%&\ \ \ \ \ \ \ \ \ \ \ \ \ 
%+
%\M_f(F_{x-1,y,k,p-1})\M_f(F_{x+1,y-1,k-1,p})
%-
%\M_f(F_{x+1,y-1,k,p-1})\M_f(F_{x-1,y,k-1,p}).
%\label{ecd}
%\end{align}
%%

Equation \eqref{ecd} holds for all integers $x\geq1$, $z\geq2$ and $k,p\geq0$ (strictly speaking, we assumed in Figures \ref{fcc} and \ref{fcd} that $z-1\geq2$; however, one readily sees by considering the analogous pictures for $z-1=1$ that the resulting regions in this case also lead to \eqref{ecd}).

We prove \eqref{ecb} by induction on $x+2z$. This works because for the eight $F$-regions in  \eqref{ecd}, the value of this statistic is $x+2z$ for first region, and strictly less for all the others. View therefore \eqref{ecd} as a recurrence giving the number of tilings of the first region in terms of the others.

For the $F$-regions in \eqref{ecd} other than $F_{x,y,k,p}$, the value of the statistic $x+2z$ is either one, two, three or four units less than the value for $F_{x,y,k,p}$. On the other hand, in order for all the regions involved in \eqref{ecc} to be defined, one needs $x\geq1$ and $z\geq2$. Therefore, the base cases of our induction are the situations when $x+2z=i$, $i\in\{0,1,2,3\}$, and the additional three cases $x=0$, $z=0$ and $z=1$. Since $x$ and $z$ are non-negative integers, $x+2z\leq3$ implies $z=0$ or $z=1$. Thus it is enough to check the base cases $x=0$, $z=0$ and $z=1$.

Before we address these base cases, note that due to the Pochhammer symbols at the numerator in the second line of \eqref{ecb}, the expression on the right hand side in \eqref{ecb} is equal to zero if $x<k+p$. This proves \eqref{ecb} in this case, as if $x<k+p$, the $k+p$ paths of lozenges that would start upward from the side of length $k+p$ in any tiling of $F_{x,z,k,p}$ would not have enough room to end on the top side.

Assume now that $x=0$. If $k+p>0$, \eqref{ecb} follows from the above observation. In the remaining case of $x=k=p=0$, the region $F_{0,z,0,0}$ looks as shown on the left in Figure \ref{fce}. All tiles are forced, so $\M_f(F_{0,z,0,0})=1$, which agrees with the $x=k=p=1$ specialization of the expression on the right hand side of \eqref{ecb}.

Consider now the base case $z=0$. The region $F_{x,0,k,p}$ is as shown on the right in Figure \ref{fce}. As we saw above, we may assume that $x\geq k+p$. Then it follows from the figure that
\begin{equation}
\M_f(F_{x,0,k,p})=SPP(2k,2k,x-k-p),
\label{ece}
\end{equation}
where $SPP(a,a,b)$ is the number of symmetric plane partitions that fit in an $a\times a \times b$ box, given by MacMahon's formula (proved by Andrews \cite{AndSPP})
\begin{equation}
SPP(a,a,b)=\prod_{i=1}^a\left[\frac{b+2i-1}{2i-1}\prod_{j=i+1}^a\frac{b+i+j-1}{i+j-1}\right].
\label{ecee}
\end{equation}

Then \eqref{ecb} follows by the fact that the right hand side of \eqref{ece} (given by the above formula) agrees with the $z=0$ specialization of the expression on the right hand side of \eqref{ecb}.

For the last base case, $z=1$, the region $F_{x,1,k,p}$ looks as pictured in Figure \ref{fcf}. Upon removing the forced lozenges, the leftover region is a trapezoid of side-lengths $2k+1$, $x-k-p-1$, $2k+1$, with free boundary along its base. Thus, 
\begin{equation}
\M_f(F_{x,1,k,p})=SPP(2k+1,2k+1,x-k-p-1),
\label{ecf}
\end{equation}
and \eqref{ecb} follows by the fact that the right hand side of \eqref{ecf} (given by \eqref{ecee}) agrees with the $z=0$ specialization of the expression on the right hand side of \eqref{ecb}. 

For the induction step, assume that \eqref{ecb} holds for all $F$-regions for which the value of the $x$-parameter plus twice the value of the $z$-parameter is strictly less than $x+2z$. Use equation~\eqref{ecd} to express $\M_f(F_{x,z,k,p})$ in terms of $\M_f(F_{x',z',k',p'})$'s with $x'+2z'<x+2z$. By the induction hypothesis, all the involved $\M_f(F_{x',z',k',p'})$'s are given by formula \eqref{ecb}. It is routine to check that the resulting formula for $\M_f(F_{x,z,k,p})$ agrees with the expression on the right hand side of~\eqref{ecb}. This completes the proof. \end{proof}

%NOTE: We're running into a problem because we removed some lozenges from original free region, and this assumes $k>0$ !!
%
%TRY: Let $F_{x,y,k,p}$ be original free region (with no forced tiles removed).
%
%Then remove tiles to get the recurrence?

\section{Asymptotics --- promontory in constrained/free corner}

If $k$ and $p$ are fixed while $x$ and $z$ grow to infinity, the region $F_{x,z,k,p}$ becomes an infinite~90 degree wedge with constrained boundary along the vertical zig-zag boundary portion and free boundary along the horizontal lattice line boundary portion. Our formulas allow us to find the answer to the following natural question: What is the effect of the presence of the dent (``promontory in the sea of dimers'') in the corner?

In view of the relationship of the flashlight regions $F_{x,y-k,k,p}$ to the carpenter's butterfly regions $H_{2x,2y}(k,p)$ (see Figure \ref{fbc}), so as to not distort the dimer statistics, we take $x=y$ as the boundary is sent to infinity. Therefore the question is to determine the correlation $\omega_c(k,p)$ of the dent with the corner, defined by

\begin{equation}
\omega_c(k,p):=\lim_{x\to\infty}\frac{\M_f(F_{x,x-k,k,p})}{\M_f(F_{x,x,0,0})}
\label{eda}
\end{equation}
(the regions at the numerator and denominator in the above limit are shown on the left in Figure \ref{fda} for $x=10$, $k=2$, $p=1$).

\begin{figure}[h]
\centerline{
\hfill
{\includegraphics[width=0.25\textwidth]{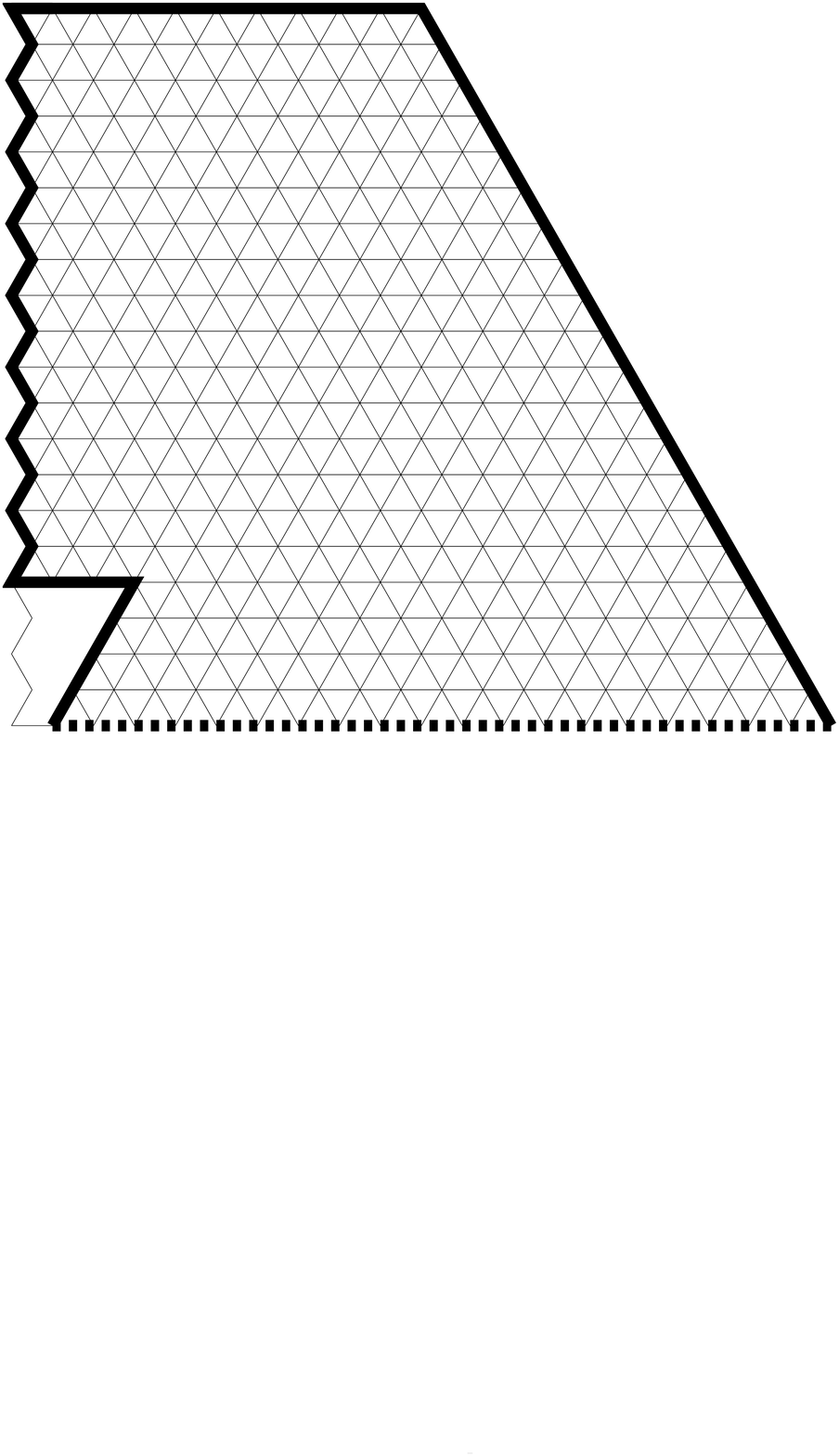}}
\hfill
{\includegraphics[width=0.50\textwidth]{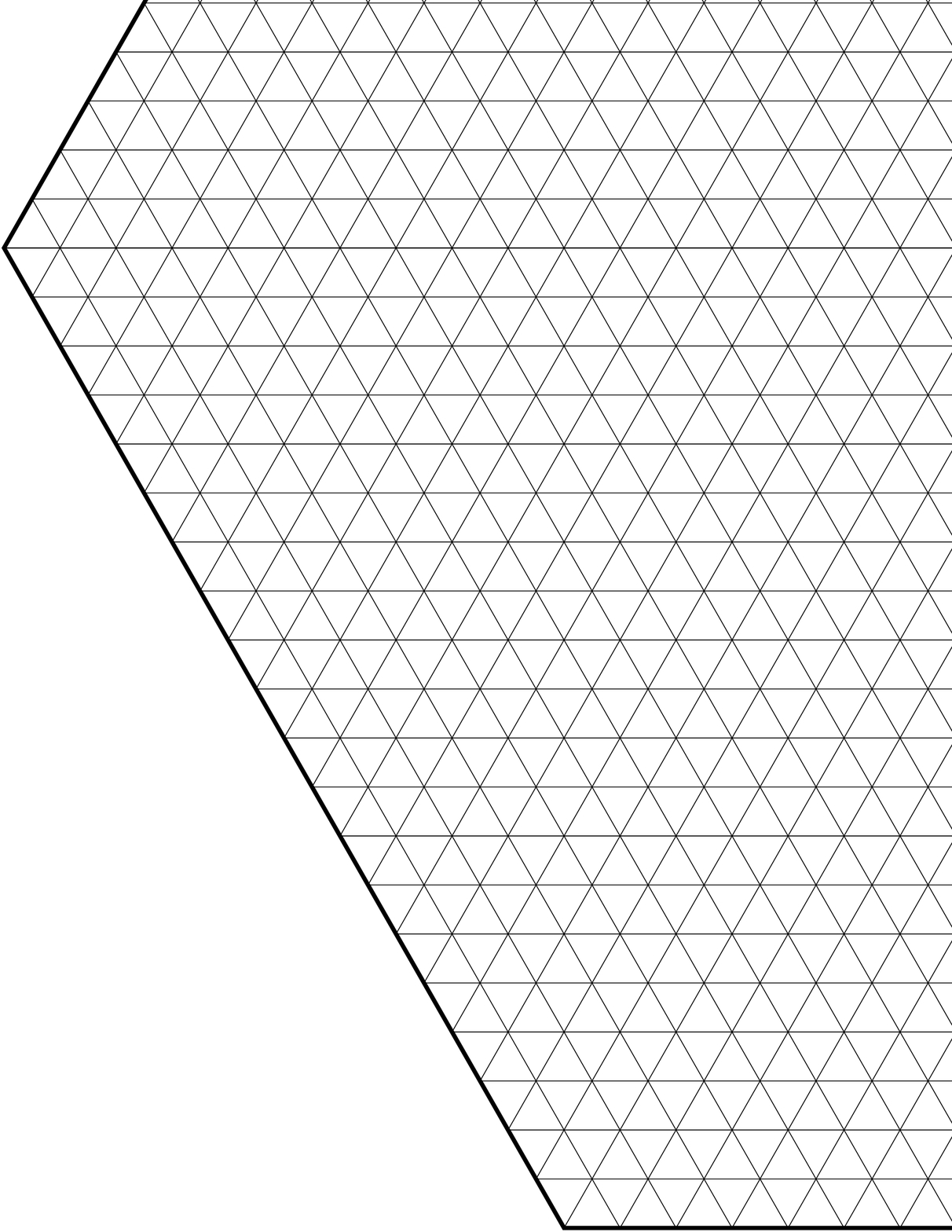}}
\hfill
}
\vskip0.1in
\centerline{
\hfill
{\includegraphics[width=0.25\textwidth]{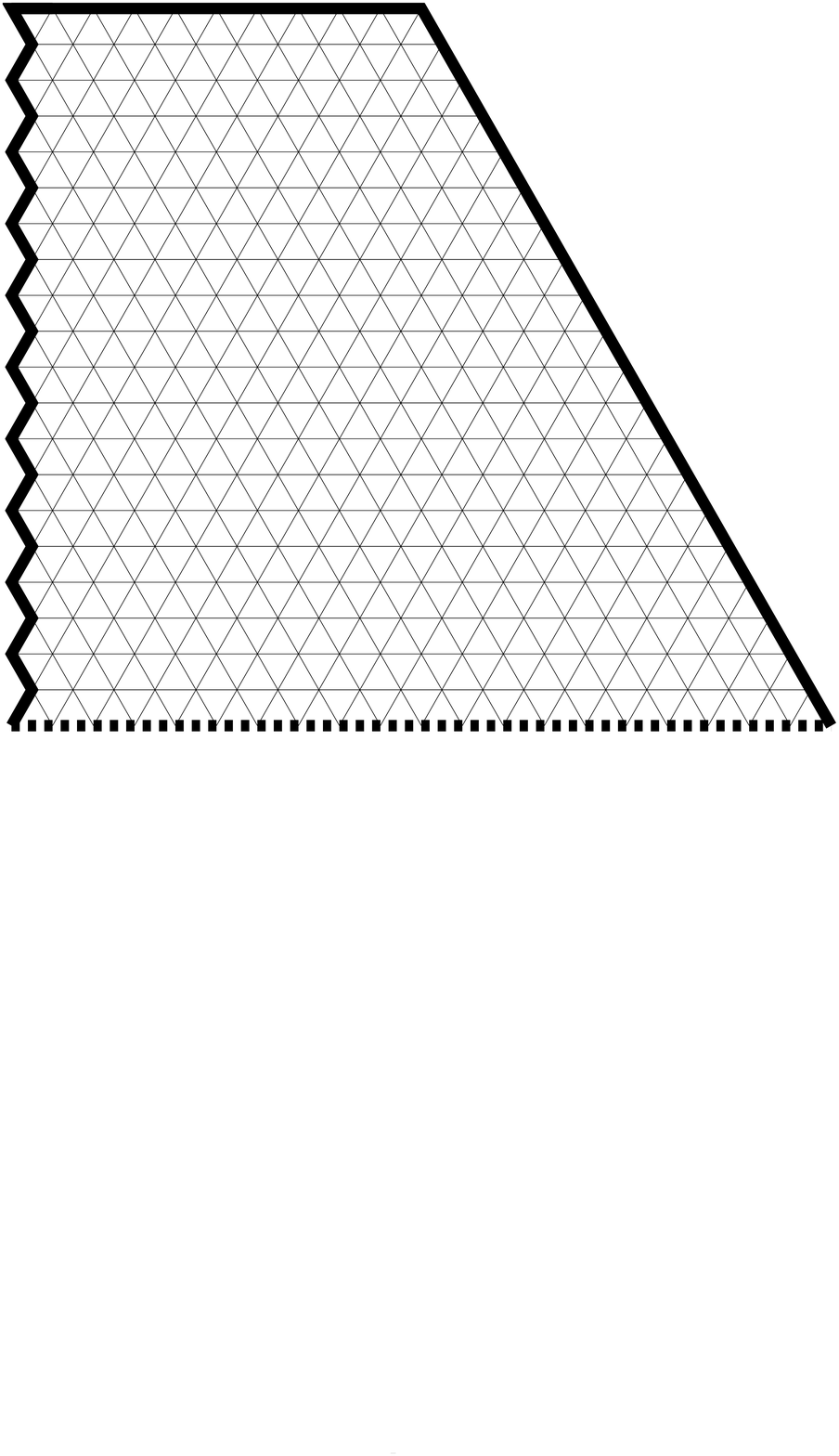}}
\hfill
{\includegraphics[width=0.50\textwidth]{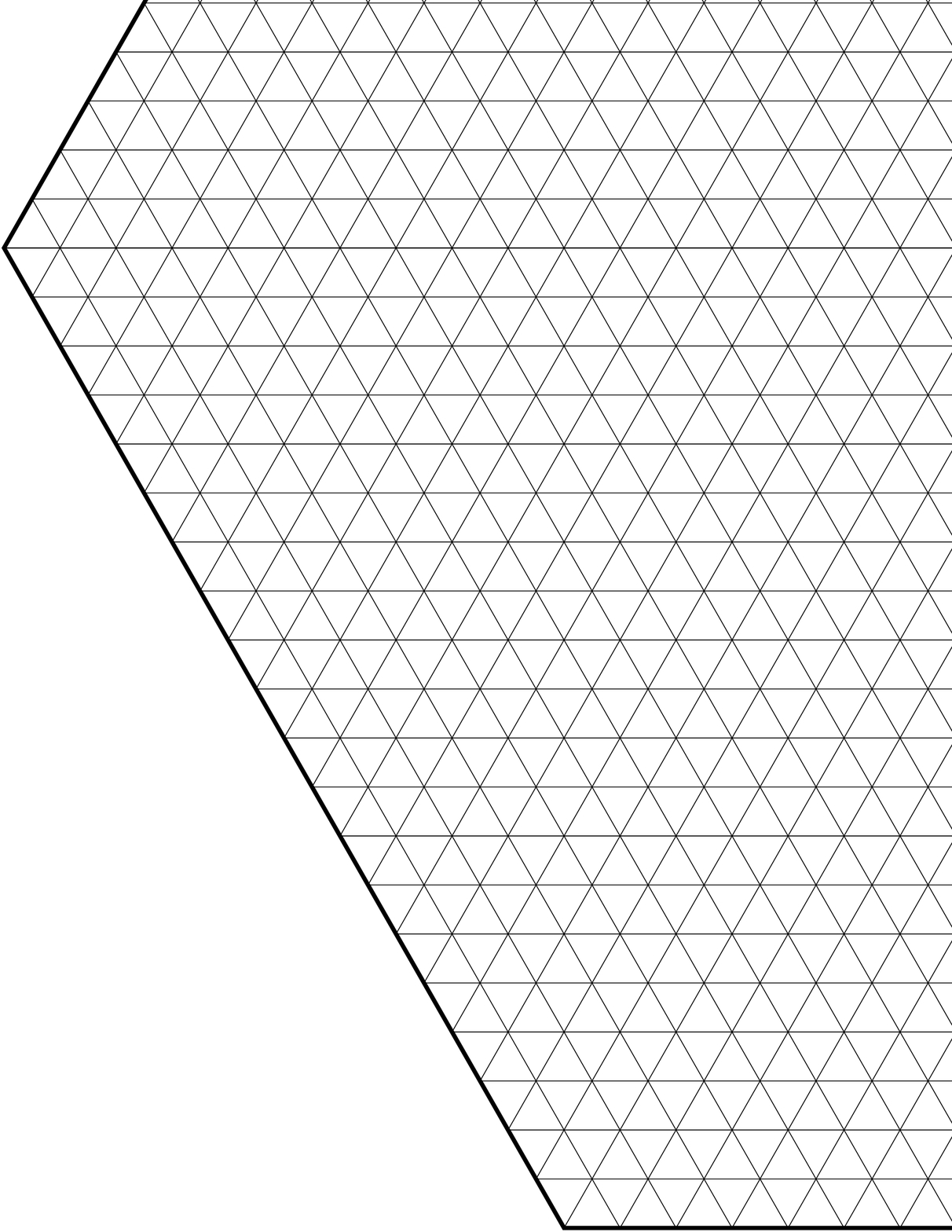}}
\hfill
}
\caption{The regions $F_{10,8,2,1}$ and $F_{10,10,0,0}$ (left) and $H_{10,10}(2,1)$ and $H_{10,10}(0,0)$ (right).}
\label{fda}
\end{figure}

The set-up is very similar to the one in our previous work \cite{rangle}, where instead of a dent (of shape and size depending on $k$ and $p$) in the corner, we had a single triangular hole of side-length 2 in the interior of the wedge at given distances from the two sides of the wedge. We saw in \cite{rangle} that the limit analogous to \eqref{eda} had, up to a multiplicative constant, the same asymptotics as the fourth root of of the correlation of the ``symmetrized system'' --- the system obtained by reflecting the 90 degree wedge in the two sides (thus ending up in that case with four triangular holes of side 2), obtaining the whole plane and eliminating the boundaries.

It would therefore be interesting to compare the $k,p\to\infty$ asymptotics of $\omega_c(k,p)$ with that of the fourth root of the bulk correlation $\omega(k,p)$ of the carpenter's butterfly, defined by 
\begin{equation}
\omega(k,p):=\lim_{x\to\infty}\frac{\M(H_{2x,2x}(k,p))}{\M(H_{2x,2x}(0,0))}
\label{edb}
\end{equation}
(the regions at the numerator and denominator in the above limit are shown on the right in Figure \ref{fda} for $x=10$, $k=2$, $p=1$).

The results of this section give explicit expressions for the correlations $\omega_c(k,p)$ and $\omega(k,0)$.

\begin{theo}
\label{tda}
For non-negative integers $k$ and $p$ we have
\begin{equation}
\omega_c(k,p)=\frac
{3^{\frac12 k^2-kp+\frac12 p^2+\frac{k}{2}-\frac{p}{2}}}
{2^{2k^2+k+p^2}}.
\label{edc}
\end{equation}
\end{theo}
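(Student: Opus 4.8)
The plan is to deduce Theorem \ref{tda} directly from the exact product formula for $\M_f(F_{x,z,k,p})$ in Proposition \ref{tca} by a careful asymptotic analysis of the ratio in \eqref{eda}. First I would substitute $z=x-k$ into \eqref{ecb} and write out $\M_f(F_{x,x-k,k,p})$ as an explicit product of Pochhammer symbols and simple factorial-type factors depending on $x$; similarly I would specialize \eqref{ecb} at $k=p=0$, $z=x$ to get $\M_f(F_{x,x,0,0})$ (which should reduce to a classical symmetric plane partition count, giving a clean closed form). The key point is that the $x$-dependence in the quotient must cancel to leading order so that a finite nonzero limit exists; what survives is a finite product of ratios of Pochhammer symbols each of which, in the $x\to\infty$ limit, contributes a power of a fixed rational number. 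I expect each ``outer'' double product $\prod_{i=1}^{z-1}\prod_{j=2}^i\frac{2k+i+j-1}{i+j-1}$ and the single products $\prod_{j=1}^k(x-k-p+2j-1)_{\dots}$, etc., to be handled by the standard device of writing a ratio of Pochhammer symbols $(a+x)_m/(b+x)_n$ and expanding $\Gamma$ functions via Stirling.

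The natural technical tool here is the asymptotic relation $\dfrac{\Gamma(x+a)}{\Gamma(x+b)}\sim x^{a-b}$ as $x\to\infty$, together with the reflection/duplication formulas for $\Gamma$ to evaluate the residual constants. Concretely, after expressing everything through Gamma functions, the ratio $\M_f(F_{x,x-k,k,p})/\M_f(F_{x,x,0,0})$ becomes a product of finitely many factors of the form $\Gamma(\alpha x + \beta)/\Gamma(\alpha x + \gamma)$ (with rational $\alpha$ — typically $1$ or $2$ coming from the arguments $x+\cdots$ and $x-k-p+2j-1$ stepping by $2$) times $x$-independent Gamma quotients. The polynomial-in-$x$ factors from Stirling must cancel identically — this is guaranteed a priori since the limit exists by the combinatorial meaning of $\omega_c(k,p)$, but I would verify the exponent bookkeeping as a consistency check. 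The powers of $2$ in the denominator of \eqref{edc} will come from duplication-formula factors $2^{\text{something}}$ produced when arguments step by $2$ rather than $1$ (i.e., from the $\prod_{j=1}^k(x-k-p+2j-1)_{2z+\cdots}$ and from $\M_f(F_{x,x,0,0})$ which, being an $SPP$-type count, itself carries such factors); the powers of $3$ will come from the ``$2k$'' shifts inside $(2k+i+j-1)/(i+j-1)$-type products evaluated at the relevant finite limits, where triple-argument coincidences in Gamma values produce $3$'s.

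Organizationally, I would proceed as follows. (1) Write $R_x := \M_f(F_{x,x-k,k,p})/\M_f(F_{x,x,0,0})$ and split it into the product of three blocks mirroring the three lines of \eqref{ecb} (the $\prod (k+i)/i$ and $\prod_{i=0}^{p-1}(\cdots)_{z-1}/(\cdots)_{z-1}$ block, the double-product block, and the second-line Pochhammer quotient block), each divided by the corresponding piece coming from the denominator. (2) For each block, replace $z=x-k$, convert to Gamma functions, and take $x\to\infty$ using the Stirling ratio above; collect the limiting constant. The double-product block $\prod_{i=1}^{x-k-1}\prod_{j=2}^i\frac{2k+i+j-1}{i+j-1}$ is the one that genuinely diverges with $x$ and must be paired against a matching divergent piece of the denominator — here I would use the known evaluation of $\prod_{i=1}^{n}\prod_{j=2}^{i}\frac{c+i+j-1}{i+j-1}$ as a ratio of hyperfactorials / $G$-functions (Barnes $G$), so that the quotient with the $k=0$ case telescopes to a finite limit expressible through $G$ at finitely many rational points, and then I'd reduce $G$-values at half-integers and at $1,2,3$ via the known special values to extract the exact powers of $2$ and $3$. (3) Multiply the three limiting constants and simplify to match \eqref{edc}, reading off that the exponent of $3$ is $\frac12 k^2 - kp + \frac12 p^2 + \frac k2 - \frac p2$ and that of $2$ in the denominator is $2k^2 + k + p^2$.

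The main obstacle will be step (2) for the divergent double-product block: getting the Barnes-$G$ (or hyperfactorial) bookkeeping exactly right — in particular tracking the half-integer $G$-values that generate the powers of $2$, and the confluence of arguments that generates the powers of $3$ — and making sure the $x$-powers from Stirling cancel between numerator and denominator with no leftover. The rest is ``routine but lengthy'' Pochhammer/Gamma manipulation of the kind standard in this area; the payoff is that all the $x$-dependence disappears and one is left with the remarkably simple closed form \eqref{edc}. I would present the computation block by block, stating the limiting constant of each block as a small lemma so that the final multiplication is transparent.
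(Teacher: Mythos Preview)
Your proposal is correct and follows essentially the same route as the paper: the paper also splits the ratio in \eqref{eda} into a product of factors $P_1,\dots,P_5$ (together with $P_5|_{k=p=0}$) coming directly from \eqref{ecb} with $z=x-k$, and analyzes each asymptotically via the Stirling ratio $\Gamma(z+a)/\Gamma(z+b)\sim z^{a-b}$, the duplication formula, and Barnes' $G$-function --- exactly the toolkit you describe. One small correction to your heuristic: the powers of $3$ arise not from ``triple-argument coincidences'' in $\Gamma$-values but simply from Stirling applied to $\Gamma$-arguments that grow like $3x$ (e.g.\ $\Gamma(3x+p-k-2i-1)$ in the $P_2$-block), since setting $z=x-k$ pushes several Pochhammer upper endpoints to near $3x$.
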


\begin{proof} %See pp. 68-74/c.XVI. 
Using the formula from Proposition \ref{tbb}, we get
\begin{equation}
\frac{\M_f(F_{x,x-k,k,p})}{\M_f(F_{x,x,0,0})}
=
P_1P_2P_3P_4\frac{P_5}{\left(P_5|_{k=0,p=0}\right)},
\label{edca}
\end{equation}
where
\begin{align}
P_1&=\prod_{i=1}^{x-k-1}\frac{k+i}{i}
\label{edcb}
\\
P_2&=\prod_{i=0}^{p-1}\frac{(2x+p-2i)_{x-k-1}}{(x+k+p-2i)_{x-k-1}}
\label{edcc}
\end{align}
\begin{align}
P_3&=\prod_{i=1}^{x-k-1}\prod_{j=2}^i\frac{2k+i+j-1}{i+j-1}
\label{edcd}
\\
P_4&=\prod_{j=1}^{k}\frac{(x-k-p+2j-1)_{2x+2k-4j+3}}{(2j-1)_{2x+2k-4j+3}}
\label{edce}
\\
P_5&=\prod_{j=1}^{x-k}\frac{(x+k-p+j)_{2x-2k-2j+1}}{(2k+j)_{2x-2k-2j+1}}.
\label{edcf}
\end{align}
We have
\begin{equation}
P_1=\frac{(x-1)!}{k!(x-k-1)!}=\frac{\Gamma(x)}{k!\Gamma(x-k)}~\sim\frac{1}{k!}x^k,\ \ \ x\to\infty,
\label{edcg}
\end{equation}
where we used the classical formula (see e.g. \cite{Olver}, (5.02)/p.\! 119)

\begin{equation}
\frac{\Gamma(z+a)}{\Gamma(z+b)}\sim z^{a-b},\ \ \ z\to\infty.
\label{edch}
\end{equation}
Expressing the Pochhammer symbols as ratios of Gamma functions using
\begin{equation}
(x)_k=\frac{\Gamma(x+k)}{\Gamma(x)},
\label{edci}
\end{equation}
the product $P_2$ can be written as
\begin{equation}
P_2=\prod_{i=0}^{p-1}
\frac{\Gamma(3x+p-k-2i-1)}{\Gamma(2x+p-2i)}
\frac{\Gamma(x+k+p-2i)}{\Gamma(2x+p-2i-1)}.
\label{edcj}
\end{equation}
Using Stirling's approximation (see e.g. \cite{Olver}, (8.16)/p.\! 88)
\begin{equation}
\Gamma(x)\sim e^{-x}x^x\left(\frac{2\pi}{x}\right)^{\frac12},\ \ \ x\to\infty
\label{edck}
\end{equation}
it follows that
\begin{equation}
P_2\sim\frac{3^{3xp-pk-\frac{p}{2}}}{2^{4xp}},\ \ \ x\to \infty.
\label{edcl}
\end{equation}
To find the asymptotics of $P_3$, write
\begin{align}
P_3
&=
\prod_{i=1}^{x-k-1}\frac{(2k+i+1)_{i-1}}{(i+1)_{i-1}}
=
\prod_{i=1}^{x-k-1}\frac{\Gamma(2k+2i)}{\Gamma(2k+i+1)}\frac{\Gamma(i+1)}{\Gamma(2i)}
\nonumber
\\
&=
\Gamma(3)\Gamma(5)\cdots\Gamma(2k+1)
\frac
{\Gamma(2x-2)\Gamma(2x-4)\cdots\Gamma(2x-2k)}
{\Gamma(x+k)\Gamma(x+k-1)\cdots\Gamma(x-k+1)}.
\label{edcm}
\end{align}
Using the identity (see e.g. \cite{Olver}, (1.08)/p.\! 35)
\begin{equation}
\Gamma(2z)=\frac{2^{2z-1}}{\pi^\frac12}\Gamma(x)\Gamma\left(x+\frac12\right)
\label{edcn}
\end{equation}
and \eqref{edch}, we obtain from \eqref{edcm} that
\begin{align}
P_3
&=\Gamma(3)\Gamma(5)\cdots\Gamma(2k+1)
\frac{2^{(2x-3)+(2x-5)+\cdots+(2x-2k-1)}}{\pi^\frac{k}{2}}
\nonumber
\\
&\ \ \ \ \ \ \ \ \ \ \ \ \ \ \ \ 
\times
\frac{\Gamma(x-1)\Gamma\left(x-\frac12\right)\Gamma(x-2)\Gamma\left(x-\frac32\right)\cdots
\Gamma(x-k)\Gamma\left(x-k+\frac12\right)}
{\Gamma(x+k)\Gamma(x+k-1)\cdots\Gamma(x-k+1)}
\nonumber
\\
&\ \ \ \ \ \ \ \ \ \ \ \ \ \ \ \ 
\sim \Gamma(3)\Gamma(5)\cdots\Gamma(2k+1)
\frac{2^{2kx-k(k+2)}}{\pi^\frac{k}{2}}x^{-k\left(k+\frac32\right)},\ \ \ x\to\infty.
\label{edco}
\end{align}
The product $P_4$ can be handled in a similar fashion. We obtain
\begin{align}
P_4
&=
\prod_{j=1}^k
\frac{\Gamma(3x+k-p-2j+2)}{\Gamma(x-k-p+2j-1)}
\frac{\Gamma(2j-1)}{\Gamma(2x+2k-2j+2)}
\nonumber
\\
&\ \ \ \ \ \ \ \ \ \ \ \ \ \ \ \ 
\sim\frac{\Gamma(1)\Gamma(3)\cdots\Gamma(2k-1)}{(3\pi)^\frac{k}{2}}
\frac{3^{3kx+k(1-p)}}{2^{2kx+k(k+1)}}x^{-k\left(k-\frac12\right)},\ \ \ x\to\infty.
\label{edcp}
\end{align}
Using the notation
\begin{equation}
[\Gamma(x)]_k:=\Gamma(x)\Gamma(x+1)\cdots\Gamma(x+k-1),
\label{edcq}
\end{equation}
one can write $P_5$ as
\begin{equation}
P_5=
\frac
{
\dfrac{[\Gamma(2x-p+1)]_p}{[\Gamma(3x-k-p+1)]_{k+p}}[\Gamma(2x+1)]_x
\dfrac{[\Gamma(x+1)]_k}{[\Gamma(1)]_{2k}}[\Gamma(1)]_x
}
{
\dfrac{[\Gamma(x+k-p+1)]_{p-k}}{[\Gamma(2x-p+1)]_{p}}[\Gamma(x+1)]_x
\dfrac{1}{[\Gamma(x+1)]_{k}}[\Gamma(x+1)]_x
}.
\label{edcr}
\end{equation}
Using the recurrence relation $\Gamma(z+1)=z\Gamma(z)$, we can then express the asymptotics of $P_5$ in terms of Barnes' $G$-function\footnote{ It suffices for us to use that Barnes' $G$-function satisfies the recurrence relation $G(z+1)=\Gamma(z)G(z)$.} as
\begin{equation}
P_5
\sim
\frac{2^{4px+k-p^2}}{3^{(k+p)(6x-k-p)/2}}\frac{\pi^k}{\Gamma(1)\Gamma(2)\cdots\Gamma(2k)}
\frac{G(3x)G(x)^3}{G(2x)^3}x^{2k^2},\ \ \ k\to\infty.
\label{edcs}
\end{equation}
Substituting the above asymptotics relations into \eqref{edca} (and using that the asymptotics of $P_6$ is just the $k=p=0$ specialization of \eqref{edcs}), we obtain after simplifications the statement of the theorem. \end{proof}

We only determine the bulk correlation $\omega(k,p)$ in the case $p=0$. What allows us to do this is exact product formulas we found in earlier work \cite{symffb} for the enumeration of lozenge tilings of what we call axial shamrock regions. For $p>0$ $\M(H_{x,x}(k,p)$ does not seem to be given by a simple product formula, and there are no other currently known manageable expressions for it that would allow one to compute the bulk correlation \eqref{edb}.

\begin{theo}
\label{tdb}
For non-negative integers $k$ we have
\begin{align}
&
\omega(k,0)=\frac{1}{\pi^k} 
\frac{\Gamma(2k+1)\Gamma\left(k+\frac12\right)}
{\Gamma(k+1)\Gamma\left(2k+\frac12\right)}
%\nonumber
%\\
\left[
\prod_{i=1}^k
\frac
{\Gamma(i)\Gamma(2i-1)}
{\Gamma\left(k+i-\frac12\right)}
\right]^2
\frac
{3^{2k^2}}
{2^{6k^2+2k}}.
\label{edd}
\end{align}
\end{theo}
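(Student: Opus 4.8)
\textbf{Proof proposal for Theorem \ref{tdb}.}

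The plan is to compute the bulk correlation $\omega(k,0)$ by relating $\M(H_{2x,2x}(k,0))$ to an enumeration formula for the relevant class of regions and then extracting the $x\to\infty$ asymptotics, exactly in the spirit of the proof of Theorem \ref{tda}. First I would invoke the exact product formula from \cite{symffb} for the number of lozenge tilings of the carpenter's butterfly region $H_{x,x}(k,p)$ in the case $p=0$ --- equivalently, of the axial shamrock region obtained by specializing the parameters --- to write $\M(H_{2x,2x}(k,0))$ as an explicit product of Pochhammer symbols (or ratios of Gamma functions) depending on $x$ and $k$. The denominator $\M(H_{2x,2x}(0,0))$ is just MacMahon's box formula \eqref{eaa} with $x=y=z=2x$, which is likewise a product of Pochhammer symbols. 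Taking the ratio defining $\omega(k,0)$ in \eqref{edb} then produces a finite product (in $k$) of Gamma-function ratios, each factor of which depends on $x$ in a controlled way.

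Next I would separate this ratio into a bounded number of subproducts, mirroring the $P_1,\dots,P_5$ decomposition used for $\omega_c(k,p)$. For each subproduct I would pass to Gamma functions via $(x)_k=\Gamma(x+k)/\Gamma(x)$, consolidate any "staircase" products $\Gamma(2x-2)\Gamma(2x-4)\cdots$ using the duplication formula \eqref{edcn}, and then apply the asymptotic relations already recorded in the excerpt: the ratio estimate \eqref{edch}, Stirling's formula \eqref{edck}, and (where consecutive Gamma values accumulate) the Barnes $G$-function recurrence $G(z+1)=\Gamma(z)G(z)$ to collect products of the form $[\Gamma(x+1)]_x$ into $G$-function expressions. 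The crucial point --- which is what makes the theorem true and not merely a limit that might diverge --- is that all the powers of $x$, all the exponential factors $e^{\pm cx}$, and all the $G(3x)G(x)^3/G(2x)^3$-type transcendental factors must cancel between numerator and denominator, since they are already present (with the same weights) in $\M(H_{2x,2x}(0,0))$. After this cancellation one is left with an $x$-independent product of $\Gamma$ values and powers of $2$ and $3$, which I would then massage into the closed form stated in \eqref{edd}.

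The main obstacle I anticipate is bookkeeping rather than conceptual: correctly pulling out and tracking the "staircase" products of Gamma functions and ensuring every exponential and polynomial-in-$x$ factor truly cancels, so that the bracketed square $\left[\prod_{i=1}^k \Gamma(i)\Gamma(2i-1)/\Gamma(k+i-\tfrac12)\right]^2$ and the prefactor $\Gamma(2k+1)\Gamma(k+\tfrac12)/\bigl(\Gamma(k+1)\Gamma(2k+\tfrac12)\bigr)$ emerge in exactly the indicated form. A secondary subtlety is confirming that the $p=0$ specialization of the \cite{symffb} formula is genuinely applicable to $H_{2x,2x}(k,0)$ (i.e.\ that the axial shamrock there coincides with the butterfly region with $p=0$) --- once that identification is made, the rest is a guided computation using only the tools already deployed in the proof of Theorem \ref{tda}. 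It is, as in that proof, "routine to check" that the simplified expression agrees with the right-hand side of \eqref{edd}.
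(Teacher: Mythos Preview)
Your proposal is correct and follows essentially the same approach as the paper's own proof: invoke the explicit product formula for $\M(H_{2x,2x}(k,0))$ from \cite{symffb} (the paper specifically cites Corollary~2.3 and Theorems~2.1 and~2.2 there), then carry out an asymptotic analysis parallel to that of Theorem~\ref{tda} to extract the $x$-independent limit \eqref{edd}. The paper's proof is stated more tersely, but the method you outline---rewriting Pochhammer products as Gamma ratios, applying \eqref{edch}, \eqref{edck}, \eqref{edcn}, and Barnes' $G$-function to handle the staircase products, and verifying that all $x$-dependent factors cancel---is exactly what ``analyzed in a manner similar to the proof of Theorem~\ref{tda}'' means here.
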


\begin{proof} %See pp. 75--79, c.XVI. 
The starting point for the proof is the explicit product formula for the numerator in the $p=0$ specialization of \eqref{edb} provided by Corollary 2.3 and Theorems 2.1 and 2.2 of our earlier work \cite{symffb}. The asymptotics of this product formula can be analyzed in a manner similar to the one presented in the proof of Theorem \ref{tda}. One arrives at the asymptotic formula \eqref{edd}. \end{proof}

\begin{cor}
\label{tdc}
The asymptotics of the bulk correlation $\omega(k,0)$ is
\begin{equation}
\omega(k,0)\sim
\frac{e^{\frac14}}{A^3 2^\frac16 k^\frac14}
\frac{3^{2k^2}}{2^{8k^2}},\ \ \ k\to\infty,
\label{ede}
\end{equation}
where $A=1.2824271291...$ is the Glaisher-Kinkelin constant.\footnote{The Glaisher-Kinkelin constant (see \cite{Glaish}) is the value $A$ for which
$\lim_{n\to\infty}
\dfrac
 {0!\,1!\,\cdots\,(n-1)!}
 {n^{\frac{n^2}{2}-\frac{1}{12}}\,(2\pi)^{\frac{n}{2}}\,e^{-\frac{3n^2}{4}}}
=
\dfrac
 {e^{\frac{1}{12}}}
 {A}
$.}
\end{cor}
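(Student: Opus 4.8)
The plan is to extract the asymptotics of the closed-form expression for $\omega(k,0)$ given in Theorem~\ref{tdb} by applying Stirling's formula \eqref{edck} (and, where convenient, the duplication formula \eqref{edcn}) to each Gamma factor, keeping track of the fact that the product $\prod_{i=1}^k\Gamma(i)\Gamma(2i-1)$ grows like a Barnes $G$-function / Glaisher--Kinkelin type product. Concretely, I would first isolate the ``slowly varying'' prefactor $\pi^{-k}\Gamma(2k+1)\Gamma(k+\tfrac12)/\bigl(\Gamma(k+1)\Gamma(2k+\tfrac12)\bigr)$: by \eqref{edch} and \eqref{edcn} this behaves like a constant times a power of $2$ times $k^{1/2}$ (indeed $\Gamma(2k+1)/\Gamma(k+1)\sim$ something like $4^k k^k$-type growth, but combined with $\Gamma(k+\tfrac12)/\Gamma(2k+\tfrac12)$ the exponential and $k^k$ parts cancel, leaving polynomial growth in $k$). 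The genuinely $k$-dependent superexponential part, $3^{2k^2}/2^{6k^2+2k}$, is already explicit, so the only real work is the bracketed square of $\prod_{i=1}^k \Gamma(i)\Gamma(2i-1)$.

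For that bracket I would write $\prod_{i=1}^k\Gamma(i) = \prod_{i=0}^{k-1} i! = G(k+1)$ using the defining recurrence $G(z+1)=\Gamma(z)G(z)$ of the Barnes $G$-function (as the paper already does in the proof of Theorem~\ref{tda}, via footnote), and similarly express $\prod_{i=1}^k\Gamma(2i-1)=\prod_{i=1}^k(2i-2)!$ in terms of $G$-values at half-integer arguments after applying the duplication formula \eqref{edcn} to each $\Gamma(2i-1)$. Then I would invoke the known asymptotic expansion of the Barnes $G$-function,
\begin{equation}
\log G(z+1)=\tfrac{z^2}{2}\log z-\tfrac{3z^2}{4}+\tfrac{z}{2}\log(2\pi)-\tfrac{1}{12}\log z+\tfrac{1}{12}-\log A+o(1),
\label{edbG}
\end{equation}
which is exactly the statement equivalent to the definition of $A$ in the footnote of the corollary. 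Plugging $z=k$ (and the appropriate half-integer shifts) into \eqref{edbG}, the $\log z$-weighted quadratic terms, the $z\log(2\pi)$ terms, and the $z^2$ terms must all cancel against the prefactor and against the explicit $3^{2k^2}/2^{6k^2+2k}$ factor, leaving precisely the power $k^{1/4}$ in the denominator, the constant $A^{-3}$, the factor $e^{1/4}$, and the surviving $2^{-1/6}$, and of course $3^{2k^2}/2^{8k^2}$ (the extra $2^{-2k^2}$ coming from the $k^2$-order terms in the $G$-asymptotics combining with $2^{-6k^2}$).

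The main obstacle I anticipate is bookkeeping: correctly collecting the contributions to the $k^2\log k$ coefficient, the $k^2$ coefficient, and the $\log k$ coefficient from several sources simultaneously --- the square of $G(k+1)$, the square of the half-integer $G$-products coming from $\prod(2i-1)$ after duplication, the slowly varying $\Gamma$-ratio prefactor, and the explicit power-of-$3$-and-$2$ factor --- and verifying that everything conspires to leave only $k^{1/4}$ and the constants shown in \eqref{ede}. A useful internal consistency check is that the coefficient of $k^2\log k$ \emph{must} vanish (otherwise $\omega(k,0)$ would not have the clean form \eqref{ede}), and similarly the coefficient of $k\log k$ and of $k$; imposing these cancellations both guides and confirms the computation. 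Once the $\log$-expansion is assembled, exponentiating and reading off the constant term gives $e^{1/4}/(A^3 2^{1/6})$, completing the proof.
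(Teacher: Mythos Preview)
Your overall strategy --- reduce to the Barnes $G$-function asymptotic, which is equivalent to the Glaisher--Kinkelin limit in the footnote --- is exactly what the paper's one-line proof has in mind (the paper merely says ``This follows using \eqref{edch} and the asymptotic relation that defines the Glaisher--Kinkelin constant''), so methodologically you are on the same track.

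However, there is a concrete omission in your plan. You repeatedly describe the bracket in Theorem~\ref{tdb} as $\prod_{i=1}^k \Gamma(i)\Gamma(2i-1)$, but the actual bracket is
\[
\prod_{i=1}^k \frac{\Gamma(i)\,\Gamma(2i-1)}{\Gamma\!\left(k+i-\tfrac12\right)},
\]
and the denominator $\prod_{i=1}^k \Gamma(k+i-\tfrac12)=\Gamma(k+\tfrac12)\Gamma(k+\tfrac32)\cdots\Gamma(2k-\tfrac12)$ is not slowly varying: it is itself a superfactorial-type product contributing at the $k^2\log k$, $k^2$, $k\log k$, and $\log k$ levels. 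Your list of ``sources'' (the square of $G(k+1)$, the half-integer $G$-products from $\prod\Gamma(2i-1)$, the $\Gamma$-ratio prefactor, and the explicit $3^{2k^2}/2^{6k^2+2k}$) omits this term entirely. Without it the cancellation you are relying on --- that the $k^2\log k$ and $k\log k$ coefficients vanish --- will not occur, and you will not arrive at the constants $e^{1/4}$, $A^{-3}$, $2^{-1/6}$, or the exponent $2^{8k^2}$. A related minor slip: the prefactor $\pi^{-k}\,\Gamma(2k+1)\Gamma(k+\tfrac12)/(\Gamma(k+1)\Gamma(2k+\tfrac12))$ is not ``a constant times a power of $2$ times $k^{1/2}$''; the ratio of Gammas is asymptotic to $\sqrt{2}$ by \eqref{edch}, but the $\pi^{-k}$ remains and must be absorbed by the $\pi$-factors emerging from duplication applied to the \emph{full} bracket, including the half-integer denominator you dropped.

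The fix is straightforward: include $\prod_{i=1}^k \Gamma(k+i-\tfrac12)$ as a fifth source, write it as a ratio $G(2k+\tfrac12)/G(k+\tfrac12)$ (or equivalently apply the footnote asymptotic at half-integer shifts), and redo the bookkeeping. Once that term is in place your plan goes through and matches the paper's proof.
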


\begin{proof} %See p. 80, c.XVI + some loose pages. 
This follows using \eqref{edch} and the asymptotic relation that defines  the Gaisher-Kinkelin constant $A$ (see footnote 3).
\end{proof}

It is clear from Theorem \ref{tda} and Corollary \ref{tdc} that corner correlation $\omega_c(k,0)$ and the fourth root of the bulk correlation $\omega(k,0)$ do not have (up to a multiplicative constant) the same asymptotics as $\to\infty$. The fact that this agreement in the asymptotics, which holds in the set-up of \cite{rangle} mentioned above, fails here, is not so surprising: In \cite{rangle}, as the arguments of the corner correlation approach infinity, the defects (a triangular hole of side 2 in that case) are removed infinitely far from the boundary; by contrast, as $k\to\infty$, the dent in the corner whose effect is recorded by $\omega_c(k)$ still starts at the corner of the boundary.

What is remarkable is that $\omega_c(k,0)$ and $\omega(k,0)^{\frac14}$ do have the same log-asymptotics.\footnote{ We say that $f(n)$ and $g(n)$ have the same log-asymptotics if $\ln f(n) \sim \ln g(n)$, $n\to\infty$.} Given the parallels between the correlation of gaps in dimer systems and 2D electrostatics we found in previous work (see \cite{sc}\cite{ec}\cite{ef}\cite{ov}), and in particular that the electrostatic potential corresponds to the logarithm of the correlation, we can view the next result as stating that the method of images from electrostatics still works in this new circumstance.

\begin{cor}
\label{tdd}
The corner correlation $\omega_c(k,0)$ and the fourth root of the bulk correlation $\omega(k,0)$ have the same log-asymptotics:
\begin{equation}
\ln \omega_c(k,0) \sim \ln \omega(k,0)^{\frac14} \sim k^2\ln\frac{\sqrt{3}}{4},\ \ \ k\to\infty.
\end{equation}

\end{cor}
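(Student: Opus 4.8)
The plan is to deduce Corollary \ref{tdd} directly from the two explicit asymptotic expressions already in hand, namely \eqref{edc} (with $p=0$) from Theorem \ref{tda} and \eqref{ede} from Corollary \ref{tdc}. First I would specialize \eqref{edc} to $p=0$, which collapses the exponents to give
\begin{equation}
\omega_c(k,0)=\frac{3^{\frac12 k^2+\frac{k}{2}}}{2^{2k^2+k}},
\end{equation}
and then take logarithms: $\ln\omega_c(k,0)=\left(\tfrac12 k^2+\tfrac{k}{2}\right)\ln 3-(2k^2+k)\ln 2$. The dominant term as $k\to\infty$ is $\tfrac12 k^2\ln 3-2k^2\ln 2=k^2\ln\!\big(3^{1/2}/4\big)$, and the remaining terms are $O(k)=o(k^2)$, so $\ln\omega_c(k,0)\sim k^2\ln(\sqrt3/4)$.

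Next I would do the same for the bulk correlation. From \eqref{ede}, $\omega(k,0)\sim C\,k^{-1/4}\,3^{2k^2}/2^{8k^2}$ for a positive constant $C=e^{1/4}/(A^3 2^{1/6})$, hence $\ln\omega(k,0)= (2\ln 3-8\ln 2)k^2-\tfrac14\ln k+\ln C+o(1)$. Dividing by $4$, the fourth root satisfies $\ln\omega(k,0)^{1/4}=\tfrac14\ln\omega(k,0)=\big(\tfrac12\ln 3-2\ln 2\big)k^2+O(\ln k)=k^2\ln(\sqrt3/4)+o(k^2)$. Comparing the two, both $\ln\omega_c(k,0)$ and $\ln\omega(k,0)^{1/4}$ are asymptotic to $k^2\ln(\sqrt3/4)$, which is exactly the assertion.

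There is essentially no obstacle here: the result is a formal consequence of the already-established closed forms, and the only care needed is to confirm that the lower-order terms (the linear-in-$k$ term in $\ln\omega_c$ and the $-\tfrac14\ln k$ plus constant in $\tfrac14\ln\omega(k,0)$) are genuinely $o(k^2)$, so they do not disturb the leading log-asymptotics. If I wanted to be scrupulous I would also note that $\ln(\sqrt3/4)<0$, so both correlations decay to zero, and the statement $\ln f\sim\ln g$ is with respect to a nonzero (negative) leading term, making the equivalence meaningful rather than vacuous. Thus the proof is a two-line computation once \eqref{edc} and \eqref{ede} are invoked.

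\begin{proof}
Setting $p=0$ in \eqref{edc} gives $\omega_c(k,0)=3^{\frac12 k^2+\frac{k}{2}}/2^{2k^2+k}$, so
\begin{equation}
\ln\omega_c(k,0)=\Big(\tfrac12 k^2+\tfrac{k}{2}\Big)\ln 3-(2k^2+k)\ln 2
= k^2\ln\frac{\sqrt3}{4}+O(k),
\end{equation}
whence $\ln\omega_c(k,0)\sim k^2\ln(\sqrt3/4)$. On the other hand, by \eqref{ede} we have
\begin{equation}
\ln\omega(k,0)=(2\ln 3-8\ln 2)\,k^2-\tfrac14\ln k+\ln\frac{e^{1/4}}{A^3 2^{1/6}}+o(1),
\end{equation}
so $\ln\omega(k,0)^{\frac14}=\tfrac14\ln\omega(k,0)=\big(\tfrac12\ln 3-2\ln 2\big)k^2+O(\ln k)= k^2\ln\frac{\sqrt3}{4}+o(k^2)$. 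Since $\ln(\sqrt3/4)\neq 0$, both $\ln\omega_c(k,0)$ and $\ln\omega(k,0)^{\frac14}$ are asymptotic to $k^2\ln(\sqrt3/4)$ as $k\to\infty$, which is the claim.
\end{proof}
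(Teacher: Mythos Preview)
Your proof is correct and follows exactly the route the paper indicates: the paper's own proof simply states that the result ``follows directly from Theorem~\ref{tda} and Corollary~\ref{tdc},'' and you have carried out precisely that computation in detail. Your added remark that $\ln(\sqrt3/4)\neq 0$ (so the asymptotic equivalence is not vacuous) is a nice touch of rigor beyond what the paper spells out.
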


\begin{proof} This follows directly from Theorem \ref{tda} and Corollary \ref{tdc}. \end{proof}

\section{Concluding remarks}

In this paper we have enumerated the lozenge tilings of a hexagon with a shamrock removed from its center that are in a fourth symmetry class, that extending the class of symmetric and self complementary plane partitions. The remaining two cases (which correspond to self complementary, resp. symmetric plane partitions) will be presented in subsequent work.

One interesting feature of our proof is that for it to work, we needed to generalize the regions under consideration, and we ended up proving a simple product formula for these more general regions. There are natural counterpart regions generalizing the base case, but they are not round.

We have also analyzed the asymptotics of the corner correlation of a macroscopic dent in a 90 degree wedge with mixed boundary conditions, and found that it has the same log-asymptotics as the fourth root of the bulk correlation of the region obtained by reflecting the dent in the two sides of the wedge. This represents an analog of the method of images from electrostatics which turns out to hold in this circumstance as well (in the presence of a macroscopic dent touching the boundary). The analogy to electrostatics may be deeper than previously thought.


\begin{thebibliography} {99}


\bibitem{AndSPP}
  G. E. Andrews, Plane partitions (I): The MacMahon
conjecture. {\it Studies in Foundations and Combinatorics, Advances in Mathematics and Supplementary Studies} {\bf 1} (1978), 131--150.


\bibitem{And}
  G. E. Andrews, Plane partitions (III): The weak Macdonald
conjecture, {\it Invent. Math.} {\bf 53} (1979), 193--225.

\bibitem{Bres}
  D. M. Bressoud, Proofs and confirmations --- The story of the
alternating sign matrix conjecture, Cambridge University
Press, Cambridge, 1999.

%\bibitem{FT}
%  M. Ciucu, Enumeration of perfect matchings in graphs with reflective symmetry, {\it J. Combin. Theory Ser. A} {\bf 77} (1997), 67--97.

%\bibitem{pptwo}
%  M. Ciucu and C. Krattenthaler, Plane partitions II: $5\frac {1}{2}$ symmetry classes, in: ``Combinatorial Methods in Representation Theory,'' M.~Kashiwara, K.~Koike, S.~Okada, I.~Terada, H.~Yamada, eds., RIMS, Kyoto, {\it Advanced Study in Pure Mathematics} {\bf 28} (2000), 83-103.  

%\bibitem{cutcor}
%  M. Ciucu and C. Krattenthaler, Enumeration of lozenge tilings of hexagons with cut off corners, {\it J. Combin. Theory Ser. A} {\bf 100} (2002), 201--231 .

\bibitem{sc}
  M. Ciucu, A random tiling model for two dimensional electrostatics, {\it Mem. Amer. Math. Soc.}, {\bf 178} (2005), no. 839, 1--106.

\bibitem{ec}
  M. Ciucu, The scaling limit of the correlation of holes on the triangular lattice with periodic boundary conditions, {\it Mem. Amer. Math. Soc.}, {\bf 199} (2009),  no. 935, 1--100.

\bibitem{ef}
  M. Ciucu, The emergence of the electrostatic field as a Feynman sum in random tilings with holes, {\it  Trans. Amer. Math. Soc.} {\bf 362}  (2010), 4921--4954.

\bibitem{ov}
  M. Ciucu, Dimer packings with gaps and electrostatics, {\it Proc. Nat. Acad. Sci.}, {\bf 105}  (2008),  no. 8, 2766--2772. 




%\bibitem{CFone}
%  M. Ciucu and I. Fischer, A triangular gap of side 2 in a sea of dimers in a $60^\circ$ angle, {\it J. Phys. A: Math. Theor.} {\bf 45} (2012), 494011.

\bibitem{vf}
  M. Ciucu and C. Krattenthaler, A dual of MacMahon's theorem on plane partitions, {\it Proc. Natl. Acad. Sci. USA}  {\bf 110} (2013), 4518--4523.

\bibitem{rangle}
  M. Ciucu, Lozenge tilings with gaps in a $90^\circ$ wedge domain with mixed boundary conditions, {\it Comm. Math. Phys.} {\bf 334} (2015), 507--532.

%\bibitem{CFtwo}
%  M. Ciucu and I. Fischer, Proof of two conjectures of Ciucu and Krattenthaler on the enumeration of lozenge tilings of hexagons with cut off corners, {\it J. Combin. Theory Ser. A} {\bf 133} (2015), 228-250.

%\bibitem{genKuo}
%  M. Ciucu, A generalization of Kuo condensation, {\it J. Combin. Theory Ser A} {\bf 134} (2015) 221-24.

%\bibitem{CFthree}
%  M. Ciucu and I. Fischer, Lozenge tilings of hexagons with arbitrary dents, {\it Adv. Appl. Math.} {\bf 73} (2016), 1--22.

\bibitem{symffa}
  M. Ciucu, Symmetries of shamrocks, Part I, {\it J. Combin. Theory Ser. A} {\bf 155} (2018), 376--397.

\bibitem{symffb}
  M. Ciucu, Symmetries of Shamrocks II: Axial Shamrocks, {\it Elect. J. Combin.}, in press. 

\bibitem{Glaish}
  J. W. L. Glaisher, On Certain Numerical Products in which the Exponents Depend Upon the Numbers, {\it  Messenger Math.} {\bf 23} (1893), 145--175.

%\bibitem{DT}
%  G. David and C. Tomei, The problem of the calissons,
%{\it Amer\. Math\. Monthly} {\bf 96} (1989), 429--431.


\bibitem{KKZ}
  C. Koutschan, M. Kauers and D. Zeilberger, A proof of George
Andrews' and David Robbins' $q$-TSPP-conjecture, {\it Proc. Natl. Acad. Sci. USA} {\bf 108} (2011), 2196--2199.

%\bibitem{Kratt}
%  C. Krattenthaler, Plane partitions in the work of Richard Stanley and his school, ``The Mathematical Legacy of Richard P. Stanley,''Amer. Math. Soc., 2016.


\bibitem{KuoOne}
  E. H. Kuo, Applications of graphical condensation for enumerating matchings and tilings, {\it Theoret. Comput. Sci.} {\bf 319} (2004), 29--57.

\bibitem{KuoTwo}
  E. H. Kuo, Graphical condensation generalizations involving Pfaffians and determinants, arxiv preprint math:CO/06055154, 2006.


\bibitem{Kup}
  G. Kuperberg, Symmetries of plane partitions and the permanent-de\-ter\-mi\-nant method, {\it J. Combin. Theory Ser. A} {\bf 68} (1994), 115--151.

\bibitem{MacM}
  P. A. MacMahon, ``Combinatory Analysis,'' vols. 1--2, Cambridge, 1916, reprinted by Chelsea, New York, 1960.

\bibitem{Olver}
  F. W. J. Olver, Asymptotics and special functions, A K Peters, Natick, Massachusetts, 1997.

%\bibitem{Proc}
%  R. A. Proctor, Odd symplectic groups, {\it Invent. Math.} {\bf 92} (1988),
%307--332.


\bibitem{Sta}
  R. P. Stanley, Symmetries of plane partitions, {\it J. Comb. Theory Ser. A} {\bf 43} (1986), 103--113.

\bibitem{Ste}
  J. R. Stembridge, Nonintersecting paths, Pfaffians and plane partitions,
{\it Adv. in Math.} {\bf 83} (1995), 96--131.




\end{thebibliography}
\end{document}